\newtheorem{thm}{Theorem}[section]
\newtheorem{prop}[thm]{Proposition}
\newtheorem{lem}[thm]{Lemma}
\newtheorem{cor}[thm]{Corollary}
\newtheorem{conj}[thm]{Conjecture}
\theoremstyle{definition}
\newtheorem{definition}[thm]{Definition}
\newtheorem{rem}[thm]{Remark}
\numberwithin{equation}{section}
\renewcommand{\L}{\mathcal{L}}
\newcommand{\X}{\mathcal{X}}
\newcommand{\T}{\mathcal{T}}
\newcommand{\V}{\mathcal{V}}
\newcommand{\N}{\mathcal{N}}
\newcommand{\zz}{\mathbb{Z}}
\newcommand{\qq}{\mathbb{Q}}
\newcommand{\M}{\mathcal{M}}
\newcommand{\p}{\mathbb{P}}
\newcommand{\pp}{\mathbb{P}}
\renewcommand{\H}{\mathcal{H}}
\newcommand{\F}{\mathcal{F}}
\renewcommand{\P}{\mathcal{P}}
\newcommand{\E}{\mathcal{E}}
\renewcommand{\O}{\mathcal{O}}
\newcommand{\Mg}{\mathcal{M}_g}
\newcommand{\Wmin}{\mathcal{W}_{\mathrm{min},2}}
\renewcommand{\tilde}{\widetilde}
\DeclareMathOperator{\ch}{ch}
\DeclareMathOperator{\td}{td}
\DeclareMathOperator{\SL}{SL}
\DeclareMathOperator{\Pic}{Pic}
\DeclareMathOperator{\Sym}{Sym}
\DeclareMathOperator{\BSL}{BSL}
\DeclareMathOperator{\BPGL}{BPGL}
\DeclareMathOperator{\BGM}{B\mathbb{G}_m}
\DeclareMathOperator{\W}{\mathcal{W}}
\renewcommand{\gg}{\mathbb{G}}
\newcommand{\bochao}[1]{{\color{red} ($\spadesuit$ Bochao: #1)}}
\begin{document}
\title{The Chow rings of moduli spaces of elliptic surfaces over $\p^1$}
\author{Samir Canning, Bochao Kong}
\email{srcannin@ucsd.edu}
\email{bokong@ucsd.edu}
\maketitle
\begin{abstract}
    Let $E_N$ denote the coarse moduli space of smooth elliptic surfaces over $\p^1$ with fundamental invariant $N$. We compute the Chow ring $A^*(E_N)$ for $N\geq 2$. For each $N\geq 2$, $A^*(E_N)$ is Gorenstein with socle in codimension $16$, which is surprising in light of the fact that the dimension of $E_N$ is $10N-2$. As an application, we show that the maximal dimension of a complete subvariety of $E_N$ is $16$. When $N=2$, the corresponding elliptic surfaces are K3 surfaces polarized by a hyperbolic lattice $U$. We show that the generators for $A^*(E_2)$ are tautological classes on the moduli space $\F_{U}$ of $U$-polarized K3 surfaces, which provides evidence for a conjecture of Oprea and Pandharipande on the tautological rings of moduli spaces of lattice polarized K3 surfaces.
\end{abstract}
\section{Introduction}
Given a smooth stack $X$ that is the solution to a moduli problem, there are often natural algebraic cycles called tautological classes in $A^*(X)$, the Chow ring of $X$ with rational coefficients. For example, when $X=\Mg$, the moduli space of smooth curves of genus $g$, there is the tautological subring $R^*(\Mg)\subset A^*(\Mg)$ generated by the $\kappa$-classes. Faber \cite{Faber} gave a series of conjectures on the structure of $R^*(\Mg)$, which assert that $R^*(\Mg)$ behaves like the algebraic cohomology ring of a smooth projective variety of dimension $g-2$, even though $\Mg$ is neither projective nor of dimension $g-2$. Looijenga \cite{Looijenga} proved that $R^i(\Mg)=0$ for $i>g-2$ and that $R^{g-2}(\Mg)\cong \qq$, settling one of Faber's conjectures. Looijenga's theorem gives a new proof of Diaz's result \cite{Diaz} that the maximal dimension of a complete subvariety of $\Mg$ is $g-2$. Faber further conjectured that $R^*(\Mg)$ should be a Gorenstein ring with socle in codimension $g-2$, meaning that the intersection product is a perfect pairing
\[
R^i(\Mg)\times R^{g-2-i}(\Mg)\rightarrow R^{g-2}(\Mg)\cong \qq.
\]
Faber \cite{Faber} and Faber--Zagier proved this conjecture for $g\leq 23$ by producing relations in the tautological ring and showing computationally that the resulting quotient is Gorenstein. 

Recently, there has been significant interest in the tautological rings $R^*(\F_{\Lambda})$ of the moduli spaces $\F_{\Lambda}$ of lattice polarized K3 surfaces \cite{MP,MOP,PY,BLMM,BerLi}. In \cite{MOP}, the tautological rings are defined as the subrings of $A^*(\F_{\Lambda})$ generated by the fundamental classes of Noether--Lefschetz loci together with push forwards of $\kappa$-classes from all Noether-Lefschetz loci. There are natural analogues of Faber's conjectures for $R^*(\F_{\Lambda})$.\footnote{We learned about these analogues from a lecture given by Rahul Pandharipande in the algebraic geometry seminar at UCSD and from a course on K3 surfaces given by Dragos Oprea.}
\begin{conj}[Oprea--Pandharipande]\label{conjecture}
Let $d=\dim \F_{\Lambda}$.
\begin{enumerate}
    \item For $i>d-2$, $R^i(\F_{\Lambda})=0$.
    \item There is an isomorphism $R^{d-2}(\F_{\Lambda})\cong \qq$.
\end{enumerate}
\end{conj}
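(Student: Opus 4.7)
The plan is to mimic Looijenga's proof of Faber's vanishing conjecture for $R^*(\Mg)$ \cite{Looijenga}, with the Noether--Lefschetz stratification of $\F_\Lambda$ playing the role of the Weierstrass-type stratification of $\Mg$. Both parts of the conjecture should be approached by descending induction on $d=\dim \F_\Lambda$, equivalently by ascending induction on the rank of $\Lambda$: the NL strata of $\F_\Lambda$ are themselves moduli spaces $\F_{\Lambda'}$ of lattice polarized K3 surfaces with $\rank(\Lambda')>\rank(\Lambda)$ and $\dim \F_{\Lambda'} < \dim \F_\Lambda$, so the inductive hypothesis is available on every proper stratum.

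Concretely, in the Marian--Oprea--Pandharipande formulation, any generator of $R^i(\F_\Lambda)$ has the form $\iota_*P$ for some NL embedding $\iota \colon \F_{\Lambda'}\hookrightarrow \F_\Lambda$ of codimension $c = \rank(\Lambda') - \rank(\Lambda)$ and some $\kappa$-monomial $P$ on $\F_{\Lambda'}$; products of such pushforwards can be reduced to this shape by the projection formula, using that intersections of NL loci are again unions of NL loci. When $\Lambda' \neq \Lambda$, the inductive hypothesis gives $P = 0$ in $R^{i-c}(\F_{\Lambda'})$ whenever $i-c > \dim \F_{\Lambda'} - 2 = (d-c)-2$, i.e.\ exactly when $i > d-2$, and this vanishing is inherited by the pushforward. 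Part (1) therefore reduces to the core case $\Lambda' = \Lambda$: showing that $\kappa$-monomials on $\F_\Lambda$ of codimension $>d-2$ already vanish in $R^*(\F_\Lambda)$. Part (2) would then follow by exhibiting a single nonzero top class, for example by intersecting $\lambda^{d-2}$ with a complete cycle in a suitable toroidal compactification where the Hodge class remains big.

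The hard step is the $\kappa$-vanishing on $\F_\Lambda$ itself. In the $\Mg$ setting, Looijenga produces the required relations from a stratification by special linear systems on curves; there is no direct K3 analog. One promising route, motivated by the present paper, is to exploit elliptic fibrations: the Chow ring $A^*(E_2)$ computed here is Gorenstein with socle in codimension $16 = \dim \F_U - 2$, already verifying Conjecture \ref{conjecture} for $\Lambda = U$. One would try to extend this by realizing $\F_\Lambda$ for lattices $\Lambda$ containing $U$ as subloci inside an $E_N$-type moduli space and reading off the necessary $\kappa$-relations, and to handle more general $\Lambda$ by deforming into an NL sublocus where an elliptic fibration becomes available.

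The main obstacle is covering lattices $\Lambda$ that do not embed any hyperbolic sublattice: for those there is no geometric model as explicit as $E_N$, and producing enough relations to force $\kappa$-vanishing above codimension $d-2$ would likely require a fundamentally new input, such as relations coming from Borcherds products or from automorphic forms on the period domain that can be pulled back to identities in $R^*(\F_\Lambda)$. Even the non-vanishing in part (2) is nontrivial, since one must rule out that the pairing into $R^{d-2}$ degenerates at the top, which is really a statement about the Hodge line bundle on a good compactification.
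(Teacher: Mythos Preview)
The statement you are attempting to prove is labeled \emph{Conjecture} in the paper, and the paper does not contain a proof of it. What the paper contributes is evidence in a single case: when $\Lambda=U$ is the hyperbolic plane, Theorem~\ref{main} and Theorem~\ref{taut} establish that $A^*(\F_U)=R^*(\F_U)$ is Gorenstein with socle in codimension $16=\dim\F_U-2$, so both parts of Conjecture~\ref{conjecture} hold for this particular $\Lambda$. There is therefore no ``paper's own proof'' to compare your proposal against; you are sketching a strategy for an open problem.

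On the strategy itself: your inductive reduction via Noether--Lefschetz loci is the natural first move, but it does not actually reduce the difficulty. You correctly observe that pushforwards $\iota_*P$ from a proper NL stratum $\F_{\Lambda'}$ vanish by induction once $i>d-2$, and you isolate the ``core case'' as the vanishing of $\kappa$-monomials on $\F_\Lambda$ itself above codimension $d-2$. But this core case is essentially the whole conjecture: the inductive hypothesis supplies nothing here, and you are left needing a supply of relations among $\kappa$-classes on every $\F_\Lambda$, which is exactly what is not known. Your proposal to extract such relations from $E_N$-type models only touches lattices containing a copy of $U$, and even there the paper's method (realizing the moduli space as an open in a vector bundle over $\BSL_2\times\BGM$ and excising an explicit discriminant) has no obvious analogue once the rank of $\Lambda$ grows. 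The suggestions involving Borcherds products or automorphic relations are reasonable directions but are not arguments; as you yourself note, this is where a genuinely new idea is required. In short, your write-up is an honest outline of why the conjecture is hard rather than a proof proposal, and the paper makes no stronger claim.
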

The primary evidence for part (1) of this conjecture is a theorem of Petersen \cite[Theorem 2.2]{Petersen}, which says that the image $RH^{2*}(\F_{\Lambda})$ of $R^*(\F_{\Lambda})$ in cohomology under the cycle class map vanishes above cohomology degree $2(d-2)$. If Conjecture \ref{conjecture} holds, then 
one can further ask for the analogue of Faber's Gorenstein conjecture: is there a perfect pairing
\[
R^i(\F_{\Lambda})\times R^{d-2-i}(\F_{\Lambda})\rightarrow R^{d-2}(\F_{\Lambda})\cong \qq?
\]

In this paper, we study the Chow rings of moduli spaces $E_N$ of elliptic surfaces $Y$ fibered over $\p^1$ with section $s:\p^1\rightarrow Y$ and fundamental invariant $N$ (see Section 2 for definitions). 
The main result is that natural analogues of Faber's vanishing and Gorenstein conjectures hold for the entire Chow ring $A^*(E_N)$ for each $N\geq 2$.

\begin{thm}\label{main}
Let $N\geq 2$ be an integer. 
\begin{enumerate}
    \item The Chow ring has the form
\[
A^*(E_N)=\qq[a_1,c_2]/I_N
\]
where $a_1\in A^1(E_N), c_2\in A^2(E_N)$, and $I_N$ is the ideal generated by the two relations from Proposition \ref{relations}.
\item The Poincar\'e polynomial collecting dimensions of the Chow groups is given by
\begin{align*}
   p_N(t)&=\sum \dim A^i(E_N)t^i\\&=1+t+2t^2+2t^3+3t^4+3t^{5}+4t^{6}+4t^{7}+5t^{8}+\\ &\qquad +4t^{9}+4t^{10}+3t^{11}+3t^{12}+2t^{13}+2t^{14}+t^{15}+t^{16}.
 \end{align*}
 
\item The Chow ring $A^*(E_N)$ is Gorenstein with socle in codimension $16$. 
\end{enumerate}

\end{thm}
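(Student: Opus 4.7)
The plan is to leverage the Weierstrass model description of elliptic surfaces. Every smooth elliptic surface $Y\to\p^1$ with section and fundamental invariant $N$ admits a minimal Weierstrass presentation as a hypersurface in a weighted $\p^2$-bundle over $\p^1$, cut out by a pair of sections $(a,b)\in H^0(\p^1,\O(4N))\oplus H^0(\p^1,\O(6N))$ whose discriminant $\Delta=4a^3+27b^2$ has no "deep" common vanishing with $a,b$ (to ensure smoothness of $Y$ and minimality). First I would realize $E_N$ as an open substack of a quotient $[U_N/G]$, where $U_N\subset H^0(\p^1,\O(4N))\oplus H^0(\p^1,\O(6N))$ is the locus of Weierstrass data yielding smooth minimal fibrations, and $G$ is a reductive group (essentially $\GL_2$, acting through its coordinate change on $\p^1$ together with a compatible scaling of the polarizing line bundle). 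Since the ambient space of sections is $G$-equivariantly a vector space, one has
\[
A^*\!\left(\bigl[\bigl(H^0(\O(4N))\oplus H^0(\O(6N))\bigr)/G\bigr]\right)\;=\;A^*(BG)\;=\;\qq[a_1,c_2],
\]
accounting for the two generators in the statement.

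Next, excision along the complement of $U_N$ imposes relations on this polynomial ring: the fundamental classes of the non-smooth and non-minimal loci inside $[(\,\cdots\,)/G]$ generate the ideal $I_N$. Here I would invoke Proposition \ref{relations} to identify the two explicit generators of $I_N$, which should arise as equivariant Chern/Segre-type classes supported on (a) the discriminant divisor, contributing one relation, and (b) the non-minimality stratum governing when a Weierstrass equation fails to be the minimal model of a smooth elliptic surface, contributing the second. This establishes part (1).

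Part (2) then becomes a bounded Hilbert series computation: with $\deg a_1=1$ and $\deg c_2=2$, I would place the two relations from (1) into a monomial order and count standard monomials $a_1^j c_2^k$ of codimension $j+2k=i$ in each graded piece. The resulting sequence of dimensions must match the stated Poincaré polynomial $1,1,2,2,3,3,4,4,5,4,4,3,3,2,2,1,1$; in particular, the computation must show that $A^i(E_N)=0$ for $i>16$, independently of $N$.

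Finally, for part (3), the key structural observation is that (2) exhibits $A^*(E_N)$ as an Artinian graded quotient of the two-variable polynomial ring $\qq[a_1,c_2]$ by two elements. Since the quotient is finite dimensional over $\qq$, the two relations automatically form a regular sequence in the Cohen--Macaulay ring $\qq[a_1,c_2]$, so $A^*(E_N)$ is a graded complete intersection, hence Gorenstein, with socle concentrated in the top nonzero degree—namely codimension $16$ by (2). The main obstacle, I expect, is not the Gorenstein conclusion (which is nearly free once (1) and (2) are in place), but rather the geometric computation in part (1): one must carefully enumerate the irreducible components of the "bad" locus of Weierstrass data, track their multiplicities, and compute their $G$-equivariant classes in $\qq[a_1,c_2]$. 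The treatment of the non-minimality stratum—where sections vanish to high order along common points of $\p^1$ and the minimal model is obtained by blowing down—is the most delicate part and the likely source of the cap at codimension $16$ that makes the ring independent of $N$.
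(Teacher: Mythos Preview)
Your overall architecture is right and matches the paper: realize $E_N$ as an open in a quotient of a vector space by $\SL_2\times\gg_m$ (rationally the same as your $\GL_2$), identify the equivariant Chow ring of the vector space with $\qq[a_1,c_2]$, and use excision to produce $I_N$. Your argument for part (3) is in fact cleaner than the paper's: the paper simply verifies by computer that the pairing is perfect, whereas your observation that an Artinian quotient of $\qq[a_1,c_2]$ by two elements is automatically a complete intersection (a system of parameters in a Cohen--Macaulay ring is a regular sequence), hence Gorenstein, is conceptually superior.

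However, there is a genuine gap in your geometric picture of where the two relations come from. You propose that one relation arises from ``the discriminant divisor'' and the other from the non-minimality stratum. This is wrong on both counts. First, the relevant discriminant condition is not divisorial: the complement of $T_N$ corresponding to Lemma~\ref{conditions}(1) is the locus where $4A^3+27B^2$ vanishes \emph{identically}, and a short argument (factor $A$ and $B$, compare vanishing orders) shows this forces $A=aG^2$, $B=bG^3$ for a degree-$2N$ form $G$, giving codimension $8N+1\geq 17$. Second, \emph{both} generators of $I_N$ come from the non-minimality stratum $\Delta_N^2$ (where some point has $v_q(A)\geq 4$ and $v_q(B)\geq 6$): one models this locus as the image of a sub-vector-bundle of $\gamma^*\gamma_*(\L^{\otimes 4}\oplus\L^{\otimes 6})$ over $\p(\V)$, namely the kernel of an evaluation map to principal parts bundles, and pushing forward along the $\p^1$-bundle $\p(\V)\to\BSL_2\times\BGM$ yields two relations because $A^*(\p(\V))$ is free of rank $2$ over the base (generated by $1$ and the hyperplane class $z$). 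The resulting relations live in degrees $9$ and $10$.

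The logical structure of part (1) is then: first show $\qq[a_1,c_2]/I_N$ already vanishes in degrees $\geq 17$ (the paper does this by machine; with the degrees $9$ and $10$ known, it follows once the relations are seen to be a regular sequence), and only then observe that the remaining stratum $\Delta_N^1$ has codimension $\geq 17$ and hence contributes nothing further. Without correcting your attribution of the relations, you would be unable to carry out the excision step, since the ``discriminant divisor'' you invoke does not exist as a divisor and the single non-minimality stratum must account for both generators.
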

We also have similar partial results for Poincar\'e polynomial for the cohomology ring when $N=2$ that will appear in future work.

A notable property is that the dimensions of the Chow groups are independent of $N$. In particular, the Chow groups $A^i(E_N)$ are only nonzero in codimension $0\leq i \leq 16$, despite the fact that the dimensions of the moduli spaces $E_N$ go to infinity with $N$. Moreover, the ring structure depends in a simple and explicit way on $N$ coming from the relations in Proposition \ref{relations}. As a consequence of Theorem \ref{main}, we obtain an analogue of Diaz's theorem \cite{Diaz} on the maximal dimension of a complete subvariety of $\Mg$.  In our case, the bound is independent of $N$.

\begin{cor}\label{subvariety}
Let $N\geq 2$ be an integer. The maximal dimension of a complete subvariety of $E_N$ is $16$.
\end{cor}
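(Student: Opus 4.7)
The plan is to deduce the corollary from Theorem \ref{main} by pairing the vanishing of the Chow groups above codimension $16$ with a positivity property of the generator $a_1 \in A^1(E_N)$.

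\emph{Step 1 (Positivity of $a_1$).} I would first identify $a_1$, up to a positive rational multiple, with the first Chern class of a natural line bundle $L$ on $E_N$ whose restriction to every complete subvariety of $E_N$ is ample. The natural candidates are Hodge-type classes associated to the universal family of elliptic surfaces over (the stack underlying) $E_N$, such as a power of $\det \pi_{*} \omega_{\Y/E_N}$, or equivalently, the pullback of an ample class from a projective compactification of $E_N$. Once $a_1$ is placed in this framework, standard positivity of the Hodge line bundle on proper families of polarized varieties gives the required ampleness on complete subvarieties.

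\emph{Step 2 (Upper bound).} By Theorem \ref{main}(2), $A^i(E_N) = 0$ for $i > 16$, so in particular $a_1^{17} = 0$ in $A^{*}(E_N)$. Let $Z \subset E_N$ be an irreducible complete subvariety of dimension $d$, and let $\pi \colon \tilde Z \to Z$ be a resolution of singularities. The pullback $\pi^{*}L$ is nef and big, hence
\[
\int_{\tilde Z} c_1(\pi^{*}L)^{d} \;=\; \deg_{Z} L^{d} \;>\; 0,
\]
so $(\alpha a_1)^{d}|_{Z} \ne 0$ in $A^{d}(Z)$ (for $\alpha>0$ the constant of proportionality). On the other hand, if $d \geq 17$ then $a_1^{d} = a_1^{d-17} \cdot a_1^{17} = 0$ in $A^{*}(E_N)$, and restriction gives $a_1^{d}|_Z = 0$ in $A^{*}(Z)$, a contradiction. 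Hence $d \leq 16$.

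\emph{Step 3 (Achieving the bound).} To see that the bound is sharp one must exhibit a $16$-dimensional complete subvariety of $E_N$. I would attempt this by constructing a complete family of elliptic surfaces with fundamental invariant $N$ carrying exactly $16$ moduli, for instance by fixing enough Weierstrass data to land in a projective linear system, or by bootstrapping from a known $16$-dimensional complete subvariety of $\F_{U}$ in the $N=2$ case (cf.\ the Noether--Lefschetz picture) and varying the remaining parameters in a proper family.

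The main obstacle is \emph{Step 1}: pinning down $a_1$ as a positive multiple of a line bundle whose restriction to every complete subvariety of $E_N$ is ample. The subtlety is that $E_N$ is only quasi-projective, so positivity must come from a line bundle defined on $E_N$ itself (e.g.\ a Hodge class of the universal elliptic surface), rather than one borrowed from a compactification that might not extend. Once that identification is secured, Step~2 is formal and Step~3 is a concrete geometric construction.
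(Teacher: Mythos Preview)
Your Step~2 is correct and is essentially the paper's argument, but your Step~1 manufactures a difficulty that isn't there. The paper observes that Miranda's GIT construction makes $E_N$ a quasi-projective variety, so it carries an honest ample line bundle $L$. For a complete subvariety $S$ of dimension $d$, ampleness gives $c_1(L)^d\cdot[S]>0$, so $c_1(L)^d\ne 0$ in $A^d(E_N)$; since $A^d(E_N)=0$ for $d>16$ by Theorem~\ref{main}, we get $d\le 16$. There is no need to identify $L$ with $a_1$, nor to invoke Hodge positivity on proper families: any ample class on a quasi-projective variety does the job. (If you like, since $A^1(E_N)\cong\qq$ is generated by $a_1$, the class $c_1(L)$ is automatically a nonzero multiple of $a_1$, so the ``identification'' you worry about is forced anyway; but the paper does not use this.) So your stated ``main obstacle'' dissolves once you remember that $E_N$ is a GIT quotient.

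On Step~3: the paper's proof establishes only the upper bound $d\le 16$ and does not construct a $16$-dimensional complete subvariety, so your plan here goes beyond what the paper actually does. If you want sharpness you will indeed need a separate construction; the paper does not supply one.
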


When $N=2$, the corresponding elliptic surfaces are K3 surfaces polarized by a hyperbolic lattice $U$ with intersection matrix
\[
\begin{bmatrix}
0 & 1 \\
1 & 0
\end{bmatrix}.
\]
We show that the generators $a_1$ and $c_2$ of $A^*(E_2)$ have natural interpretations as tautological classes in $R^*(\F_{U})$, where $\F_U$ is the moduli space of $U$-polarized K3 surfaces.
\begin{thm}\label{taut}
Under the identification of $A^*(E_2)$ with $A^*(\F_{U})$, the classes $a_1$ and $c_2$ lie in $R^*(\F_{U})$. Therefore, $A^*(\F_U)=R^*(\F_U)$ is a Gorenstein ring with socle in codimension $16$.
\end{thm}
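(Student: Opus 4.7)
The plan is to exhibit each of the two generators $a_1 \in A^1(E_2)$ and $c_2 \in A^2(E_2)$ as tautological classes on $\F_U$; since Theorem~\ref{main}(1) shows that these classes generate the entire Chow ring, the equality $A^*(\F_U) = R^*(\F_U)$ follows, and the Gorenstein conclusion is then a direct consequence of Theorem~\ref{main}(3). The first step is to unwind the definitions of $a_1$ and $c_2$ from Section~2. In the Weierstrass presentation of $E_N$ as a quotient stack by $\Aut(\p^1) \times \gg_m$, the class $a_1$ arises from a character of the $\gg_m$-factor, while $c_2$ arises from a Chern class coming from the $\PGL_2$-factor. On the $\F_U$ side, the universal family $\pi \colon \X \to \F_U$ of $U$-polarized K3 surfaces carries two universal divisor classes $\L_1, \L_2$ spanning the lattice $U$, together with the Hodge line bundle $\Lambda = \pi_*\omega_\pi$; because K3 surfaces have trivial canonical bundle, $\omega_\pi \cong \pi^*\Lambda$.

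For $a_1$: the Hodge class $\lambda = c_1(\Lambda)$ is tautological, because the projection formula together with $\L_1 \cdot \L_2 = 1$ on every K3 fiber yields
\[
\pi_*\bigl(c_1(\omega_\pi) \cdot \L_1 \cdot \L_2\bigr) \;=\; \lambda \cdot \pi_*(\L_1 \cdot \L_2) \;=\; \lambda,
\]
realizing $\lambda$ as a $\kappa$-class on $\F_U$. I would then verify, by comparing the two presentations of the same moduli space, that $a_1$ is a nonzero rational multiple of $\lambda$; both classes arise from the same weight-one character of the $\gg_m$ acting on the Hodge line, so the comparison reduces to a character computation.

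For $c_2$: the natural candidate is the second Chern class of a rank-$2$ vector bundle on $\F_U$ arising from the elliptic fibration structure. Since $\L_2$ restricts to the class of an elliptic fiber with $h^0 = 2$ and $h^1 = 0$ on every $U$-polarized K3 surface, the direct image $\pi_*\L_2$ is a rank-$2$ bundle on $\F_U$, and its projectivization is precisely the relative base $\p^1$ of the elliptic fibration. Grothendieck--Riemann--Roch applied to $\pi_*\L_2$ expresses $c_2(\pi_*\L_2)$ as a polynomial in $\lambda$ and the $\kappa$-classes $\pi_*\bigl(c_1(\omega_\pi)^a \L_2^b\bigr)$, so $c_2(\pi_*\L_2) \in R^2(\F_U)$. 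Matching $c_2$ with $c_2(\pi_*\L_2)$, allowing for a correction by $\lambda^2$, then places $c_2$ in $R^*(\F_U)$.

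The main obstacle is this matching step: making explicit the dictionary between the $\PGL_2$-equivariant presentation of $E_2$ and the projectivization $\p(\pi_*\L_2) \to \F_U$. Once the universal $\p^1$ over $E_2$ is identified with $\p(\pi_*\L_2)$, the class $c_2$ is identified with a specific tautological combination via Grothendieck--Riemann--Roch, and Theorem~\ref{taut} follows.
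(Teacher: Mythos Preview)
Your approach is essentially the paper's: show $a_1$ is a multiple of $\lambda$, and relate $c_2$ to Chern classes of $\pi_*\O(f)$ via Grothendieck--Riemann--Roch. Two points are worth sharpening.

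For $a_1$, the paper avoids any direct character comparison. Since Theorem~\ref{main} gives $\dim A^1(E_2)=1$, it suffices to know that $\lambda\neq 0$ on $\F_U$ (which the paper imports from Petersen or van der Geer--Katsura); then $\lambda$ generates $A^1(\F_U)$ and $a_1$ is automatically a rational multiple of it. Your $\kappa$-class expression for $\lambda$ is fine but not needed, and the ``character computation'' you allude to is replaced by this dimension count.

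For $c_2$, the matching you flag as the obstacle has a genuine wrinkle you have not addressed. On $\E_2=[T_2/\SL_2\times\gg_m]$ the class $c_2$ is $c_2(\V)$ for the universal rank-$2$ bundle with \emph{trivial} first Chern class, whereas $\pi_*\O(f)$ on $\F_U$ has no reason to have trivial determinant; so the identification $\V\leftrightarrow\pi_*\O(f)$ cannot hold on the nose. The paper handles this by introducing the $\mu_2$-gerbe $\tilde{\F}_U\to\F_U$ on which a square root $\N$ of $\det\pi_*\O(f)$ exists, so that the pullback of $\V$ becomes $\pi_*\O(f)^\vee\otimes\N$. One then computes
\[
c_2\bigl(\pi_*\O(f)^\vee\otimes\N\bigr)=-\tfrac{1}{4}\,c_1(\pi_*\O(f))^2+c_2(\pi_*\O(f)),
\]
which is visibly pulled back from $\F_U$ (the half-integral contribution of $\N$ cancels) and is tautological by GRR exactly as you outline. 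Since $\E_2$, $\Wmin$, $\F_U$, and $\tilde{\F}_U$ all share the coarse space $E_2$, their rational Chow rings coincide and the argument closes. Your ``correction by $\lambda^2$'' is morally this $-\tfrac{1}{4}c_1^2$ term, but without the gerbe and the square root the comparison is not well-posed.
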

We view Theorem \ref{taut} as providing a piece of evidence toward Conjecture \ref{conjecture}.

The paper is structured as follows. In Section 2, we collect the necessary background on elliptic surfaces, the closely related notion of Weierstrass fibrations, and their moduli. In Section 3, we prove Theorem \ref{main} and Corollary \ref{subvariety}. In Section 4, we explore the case $N=2$ and prove Theorem \ref{taut}. We also compute relations among the codimension $1$ $\kappa$-classes.

\subsection*{Notations and Conventions}
\begin{enumerate}
    \item Schemes are over a fixed algebraically closed field $k$ of characteristic not $2$ or $3$. All stacks are fibered over the category of schemes over $k$.
    \item We denote the Chow ring of a space $X$ with \emph{rational coefficients} by $A^*(X)$. 
    \item We use the subspace (classical) convention for projective bundles.
\end{enumerate}

\subsection*{Acknowledgments} We are grateful to Kenneth Ascher, Elham Izadi, Hannah Larson, Dragos Oprea, Johannes Schmitt and David Stapleton for many helpful conversations. B.K. would like to especially thank Dragos Oprea for suggesting the study of the tautological ring for elliptic K3 surfaces. During the preparation of this article, S.C. was partially supported by NSF RTG grant DMS-1502651 and B.K. was partially supported by NSF grant DMS-1802228. This work will be part of the authors' Ph.D. theses.

\section{Elliptic Surfaces and Weierstrass Fibrations}
In this section, we collect the necessary background information on elliptic surfaces and Weierstrass fibrations following Miranda \cite{Miranda}. The main objects of interest in this paper will be moduli spaces of minimal elliptic surfaces over $\p^1$ with section.

\begin{definition}
A minimal elliptic surface over $\p^1$ with section consists of the following data:
\begin{enumerate}
    \item a smooth projective surface $Y$,
    \item a proper morphism $\pi:Y\rightarrow \p^1$ whose general fiber is a smooth connected curve of genus $1$ and such that none of the fibers contain any $(-1)$-curves,
    \item a section $s:\p^1\rightarrow Y$ of $\pi$. 
\end{enumerate}
\end{definition}
\begin{rem}
Note that the minimality condition is different from the usual one given in the birational geometry of surfaces. There can be $(-1)$-curves on the surface $Y$, but they must not lie in the fibers of $p$.
\end{rem}
We will study moduli spaces of minimal elliptic surfaces by studying the closely related notion of Weierstrass fibrations.
\begin{definition}
A Weierstrass fibration over $\p^1$ consists of the following data:
\begin{enumerate}
    \item a projective surface $X$,
    \item a flat proper morphism $p:X\rightarrow \p^1$ such that every fiber is an irreducible curve of arithmetic genus $1$ and the general fiber is smooth,
    \item a section $s:\p^1\rightarrow X$ of $p$ whose image does not intersect the singular points of any fiber.
\end{enumerate}
\end{definition}
Weierstrass fibrations $X\rightarrow \p^1$ have a natural invariant associated to them that governs aspects of the geometry of $X$ and the associated moduli spaces.
\begin{definition}
Let $p:X\rightarrow \p^1$ be a Weierstrass fibration.
\begin{enumerate}
    \item The fundamental line bundle associated to $p:X\rightarrow \p^1$ is the line bundle \[\mathbb{L}=(R^1p_*\O_X)^{\vee}.\]
    \item The fundamental invariant associated to $p:X\rightarrow \p^1$ is the integer 
    \[N=\deg \mathbb{L}.\]
\end{enumerate}
\end{definition}
Because $\mathbb{L}$ is a line bundle on $\p^1$, it is of the form $\O(N)$ where $N$ is the fundamental invariant. By \cite[Corollary 2.4]{Miranda}, the fundamental invariant is always nonnegative. 

There is a one-to-one correspondence between minimal elliptic surfaces with section and Weierstrass fibrations with at worst rational double points as singularities. Given a minimal elliptic surface $\pi:Y\rightarrow \p^1$, we obtain a Weierstrass fibration with at worst rational double points  $\p:X\rightarrow \p^1$ by contracting any rational components in the fibers that do not meet the section. Conversely, given a Weierstrass fibration $p:X\rightarrow \p^1$ with at worst rational double points as singularities, resolving the singularities and blowing down $(-1)$-curves in the fibers yields a minimal elliptic surface $\pi:Y\rightarrow \p^1$. We say that $Y$ contracts to $X$ and $X$ resolves to $Y$. Weierstrass fibrations have a representation as divisors on a $\p^2$-bundle over $\p^1$, which Miranda \cite{Miranda} used to construct coarse moduli spaces for Weierstrass fibrations, and hence elliptic surfaces, using Geometric Invariant Theory.

\begin{lem}[Corollary 2.5 of \cite{Miranda}]\label{conditions}
Let $\pi:Y\rightarrow \p^1$ be a minimal elliptic surface with section contracting to a Weierstrass fibration $p:X\rightarrow \p^1$ with fundamental invariant $N$. Then $X$ is isomorphic to the closed subscheme of $\p(\O\oplus\O(2N)\oplus\O(3N))$ defined by
\[
y^2z=x^3+Axz^2+Bz^3.
\]
where $A\in H^0(\p^1,\O(4N))$ and $B\in H^0(\p^1,\O(6N))$. Moreover,
\begin{enumerate}
    \item $4A^3+27B^2$ is not identically zero. If it vanishes at $q\in \p^1$, the fiber of $X$ over $q$ is singular.
    \item For every $q\in \p^1$, $v_q(A)\leq 3$ or $v_q(B)\leq 5$, where $v_q$ is the order of vanishing at $q$.
\end{enumerate}
\end{lem}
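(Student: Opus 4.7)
The plan is to follow the standard ``relative Weierstrass embedding'' argument, which globalizes the classical fact that a smooth genus-one curve with a chosen point $O$ admits an embedding into $\p^2$ as a Weierstrass cubic via the linear system $|3O|$. The first step is to compute the pushforwards $p_*\O_X(ns)$ for $n=1,2,3$. Since each fiber $F_q$ has arithmetic genus $1$ and $s(q)$ is a smooth point not lying on the singular locus of the fiber, relative Serre duality gives $R^1p_*\O_X(ns)=0$ for $n\geq 1$, so base-change yields that $p_*\O_X(ns)$ is locally free of rank $n$ on $\p^1$ with fibers $H^0(F_q,\O_{F_q}(ns(q)))$. Using the exact sequences
\[
0\to p_*\O_X((n-1)s)\to p_*\O_X(ns)\to (\text{graded piece})\to 0
\]
and identifying the normal bundle of $s(\p^1)\subset X$ with $\mathbb{L}^{-1}=\O(-N)$ (via $R^1p_*\O_X=\mathbb{L}^{\vee\vee,\vee}$ through Serre duality along the fibers), the plan is to show
\[
p_*\O_X=\O,\qquad p_*\O_X(2s)=\O\oplus \O(-2N),\qquad p_*\O_X(3s)=\O\oplus\O(-2N)\oplus\O(-3N),
\]
the splittings coming from the $\p^1$-base (every bundle splits) together with the degree computation.

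The second step is to convert this into the embedding. Dualizing and using the subspace convention, the line bundle $\O_X(3s)$ together with its relative global generation yields a closed immersion $X\hookrightarrow \p(\O\oplus\O(2N)\oplus\O(3N))$ over $\p^1$. Picking a generator $z$ of $\O$, a generator $x$ of the $\O(-2N)$-summand, and a generator $y$ of the $\O(-3N)$-summand, I would then write down bases for $p_*\O_X(6s)=\O\oplus\O(-2N)\oplus\O(-3N)\oplus\O(-4N)\oplus\O(-5N)\oplus\O(-6N)$. Both $y^2z$ and $x^3$ are sections landing in the $\O(-6N)$-piece, while $Axz^2$ and $Bz^3$ (with $A\in H^0(\O(4N))$, $B\in H^0(\O(6N))$) fill out the remaining summands. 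After normalizing (eliminating $xy$, $x^2z$, $yz^2$ by completing the square and cube, which is possible in characteristics $\neq 2,3$), the unique relation in $p_*\O_X(6s)$ must be of the form $y^2z=x^3+Axz^2+Bz^3$, yielding the stated Weierstrass equation.

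The third step is the verification of conditions (1) and (2). For (1), the discriminant $4A^3+27B^2$ is up to nonzero scalar the classical discriminant of the Weierstrass cubic, so its vanishing locus is exactly the locus of $q$ where the fiber is singular; it cannot vanish identically because the general fiber is assumed smooth. For (2), if both $v_q(A)\geq 4$ and $v_q(B)\geq 6$ at some $q$, then the local form of the equation shows that the total space $X$ acquires a singularity along the section at $q$ that is worse than a rational double point (indeed, the section passes through a singular point of the fiber and of the surface), contradicting the defining condition that $s$ avoids the singular locus of every fiber. This last implication is the step I expect to be the most delicate; I would handle it by completing locally at a fiber and computing the Jacobian of the Weierstrass equation at the point $(x,y,z)=(0,1,0)$ on the section, showing that $v_q(A)\leq 3$ or $v_q(B)\leq 5$ is exactly the condition for $s$ to avoid $X_{\mathrm{sing}}$.
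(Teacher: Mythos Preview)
The paper does not supply its own proof of this lemma; it is quoted verbatim as Corollary~2.5 of Miranda, so there is nothing in the paper to compare your argument against beyond the bare citation. Your outline of the embedding step and of condition~(1) is the standard Miranda argument and is fine: the filtration on $p_*\O_X(ns)$ with graded pieces $\mathbb{L}^{-n}$, the resulting identification $p_*\O_X(3s)\cong\O\oplus\O(-2N)\oplus\O(-3N)$, the relative embedding via $|3s|$, and the extraction of the Weierstrass relation inside $p_*\O_X(6s)$ all go through as you describe.

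Your treatment of condition~(2), however, is based on a misidentification of where the relevant singularity sits. The section lies at $[x:y:z]=[0:1:0]$, and a direct check in the affine chart $y=1$ shows that the Weierstrass cubic $z=x^3+A(q)xz^2+B(q)z^3$ is \emph{always} smooth at $(x,z)=(0,0)$, regardless of $A(q)$ and $B(q)$. So the Jacobian computation you propose at $[0:1:0]$ will never detect anything, and the contradiction you aim for (``the section meets a singular point of the fiber'') simply does not occur. Condition~(2) is not a consequence of the section-avoidance clause in the definition of a Weierstrass fibration; it is a consequence of the hypothesis that $X$ arises by contracting a minimal elliptic surface, hence has at worst \emph{rational double points}. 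The singular point to analyze is the node or cusp of the bad fiber, located at $[0:0:1]$. In the affine chart $z=1$ with local parameter $t$ at $q$, the surface is cut out by $y^2=x^3+A(t)x+B(t)$; when $v_q(A)\geq 4$ and $v_q(B)\geq 6$, the substitution $(x,y)\mapsto(t^2x,t^3y)$ shows the equation is non-minimal, and one checks (e.g.\ via Tate's algorithm or a direct multiplicity computation) that the resulting surface singularity is strictly worse than ADE. That is the argument you need to supply.
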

Set $V_{4N}:=H^0(\p^1,\O(4N))$ and $V_{6N}:=H^0(\p^1,\O(6N))$. Let $T_N\subset V_{4N}\oplus V_{6N}$ denote the open subspace satisfying conditions $(1)$ and $(2)$ from Lemma \ref{conditions}. The following is \cite[Corollary 2.8]{Miranda}.
\begin{cor}
The set of isomorphism classes of minimal elliptic surfaces $\pi:Y\rightarrow \p^1$ with $\deg R^1p_*\O_{X}=-N$ and with fixed section (equivalently, Weierstrass fibrations with only rational double points) is in $1-1$ correspondence with the set of orbits of $\SL_2\times \gg_m$ on $T_N$.
\end{cor}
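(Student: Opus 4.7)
The plan is to promote Lemma \ref{conditions} to a bijection by understanding when two points $(A,B),(A',B') \in T_N$ define isomorphic Weierstrass fibrations. That lemma already gives a surjection from $T_N$ onto the set of isomorphism classes with fundamental invariant $N$, so it suffices to show that the fibers of this map are precisely the orbits of $\SL_2 \times \gg_m$.

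I would begin by writing down the two group actions and verifying that each one induces an isomorphism of Weierstrass fibrations. The group $\SL_2$ acts on $V_{4N} \oplus V_{6N}$ as $\Sym^{4N} \oplus \Sym^{6N}$ of the standard representation; since $4N$ and $6N$ are even, $-\Id$ acts trivially and the action factors through $\PGL_2 = \Aut(\p^1)$. Any $\sigma \in \Aut(\p^1)$ lifts canonically to a bundle automorphism of $\p(\O \oplus \O(2N) \oplus \O(3N))$ covering $\sigma$, carrying $X_{(A,B)}$ isomorphically onto $X_{(\sigma_*A, \sigma_*B)}$ and sending the section at infinity to itself. The factor $\gg_m$ acts by $\lambda\cdot(A,B) = (\lambda^{4}A, \lambda^{6}B)$; this is induced on the $\p^2$-bundle by scaling the summands $\O(2N)$ and $\O(3N)$ by $\lambda^{2}$ and $\lambda^{3}$ respectively, which preserves both the Weierstrass form and the section. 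A direct check shows both actions preserve conditions $(1)$ and $(2)$ of Lemma \ref{conditions}, hence restrict to actions on $T_N$.

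The heart of the proof is the converse: any isomorphism $\phi\colon (X_{(A,B)},p,s) \xrightarrow{\sim} (X_{(A',B')},p',s')$ of Weierstrass fibrations arises from an element of $\SL_2 \times \gg_m$. Because $p_*\O_{X_{(A,B)}} = \O_{\p^1}$ (the general fiber is connected of arithmetic genus $1$), $\p^1$ is the Stein factorization of $p$, and $\phi$ descends to a unique automorphism $\bar\phi \in \Aut(\p^1) = \PGL_2$; lift $\bar\phi$ to some $g \in \SL_2$. After replacing $(A',B')$ by $g^{-1}\cdot (A',B')$, we reduce to the case $\bar\phi = \id$, so $\phi$ lives over $\p^1$ and intertwines the two sections. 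Next I would recover the embedding data intrinsically from $(X,p,s)$: by cohomology and base change for the family of Weierstrass cubics, the filtration by pole order along $s(\p^1)$ produces a canonical isomorphism
\[
p_*\O_X(3s(\p^1)) \cong \O_{\p^1} \oplus \mathbb{L}^{-2} \oplus \mathbb{L}^{-3},
\]
whose graded pieces determine the coordinates $z,x,y$ up to scalar automorphisms of the three summands.

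Consequently $\phi$ is induced by a bundle automorphism of $\O \oplus \mathbb{L}^{-2} \oplus \mathbb{L}^{-3}$ that respects this filtration, acting on the three graded pieces by scalars $(\alpha,\mu,\nu)\in \gg_m^3$. Preservation of the Weierstrass relation $y^2 z = x^3 + Axz^2 + Bz^3$ forces $\alpha\nu^2 = \mu^3$, and quotienting by the central $\gg_m$ of scalar automorphisms of the $\p^2$-bundle leaves a one-parameter family parameterized by $\lambda = \mu/\nu$; substituting into the equation yields $(A',B') = (\lambda^4 A, \lambda^6 B)$, placing $(A,B)$ and $(A',B')$ in the same $\gg_m$-orbit. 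The hard part will be establishing the intrinsic description of the $\p^2$-bundle and its filtration from $(X,p,s)$ alone; once that is in hand, the remaining matching of scaling parameters is an elementary calculation.
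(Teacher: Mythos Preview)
The paper does not give its own proof of this corollary: it is stated verbatim as \cite[Corollary 2.8]{Miranda} and simply cited. So there is no in-paper argument to compare against; the authors treat this as background imported wholesale from Miranda.

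Your proposal is essentially a reconstruction of Miranda's own argument (Lemma~II.5.1 through Corollary~II.5.3 and the discussion in \S III of \cite{Miranda}): surjectivity from Lemma~\ref{conditions}, descent of any isomorphism to $\Aut(\p^1)$ via connectedness of fibers, and then the intrinsic recovery of the $\p^2$-bundle embedding from $p_*\O_X(3s)\cong \O\oplus\mathbb{L}^{-2}\oplus\mathbb{L}^{-3}$ to pin down the residual $\gg_m$. That is exactly the right skeleton, and your identification of the ``hard part'' is accurate.

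One small point worth tightening: an automorphism of $\O\oplus\mathbb{L}^{-2}\oplus\mathbb{L}^{-3}$ preserving the pole-order filtration is a priori only upper-triangular, not diagonal; the off-diagonal entries are sections of $\mathbb{L}$, $\mathbb{L}^2$, $\mathbb{L}^3$, which are nonzero for $N>0$. What forces them to vanish is not the filtration alone but the requirement that the image equation remain in short Weierstrass form (no $x^2$, $xy$, or $y$ term), which in characteristic $\neq 2,3$ kills the translations $x\mapsto x+r$, $y\mapsto y+sx+t$. You gesture at this with ``preservation of the Weierstrass relation,'' but it would be cleaner to say so explicitly before reducing to the scalar triple $(\alpha,\mu,\nu)$.
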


In order to give the set of orbits a geometric structure, Miranda analyzes the stability of the action of $\SL_2\times \gg_m$ on $T_N$. 
\begin{prop}\label{stability}
Let $(A,B)\in V_{4N}\oplus V_{6N}$ be a pair of forms.
\begin{enumerate}
    \item The point corresponding to $(A,B)$ is not semistable if and only if there is a point $q\in \p^1$ such that
    \[
    v_q(A)>2N \textit{ and } v_q(B)>3N.
    \]
    \item The point corresponding to $(A,B)$ is not stable if and only if there is a point $q\in \p^1$ such that 
    \[
    v_q(A)\geq 2N \textit{ and } v_q(B)\geq 3N.
    \]
\end{enumerate}
\end{prop}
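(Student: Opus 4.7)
The proof is an application of the Hilbert--Mumford numerical criterion. The action in question is of $\SL_2\times\gg_m$ on $V_{4N}\oplus V_{6N}$: $\SL_2$ acts through its natural action on $\p^1$, while $\gg_m$ rescales $(A,B)\mapsto(t^4A,t^6B)$, corresponding to the Weierstrass change of variables $x\mapsto t^2x,\ y\mapsto t^3y$. Taking the $\gg_m$-quotient realizes the problem as $\SL_2$ acting on the weighted projective space $\p(V_{4N}\oplus V_{6N})$ with $\gg_m$-weights $4$ on $V_{4N}$ and $6$ on $V_{6N}$, and one applies H--M on this space.

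Any nontrivial 1-parameter subgroup of $\SL_2$ is conjugate to $\lambda(t)=\mathrm{diag}(t,t^{-1})$, which fixes $[1:0]$ and $[0:1]$ in $\p^1$. Using the remaining $\SL_2$-conjugation freedom, we may assume that any prescribed point $q\in\p^1$ is the fixed point $[0:1]$. Writing $A=\sum_i a_ix^iy^{4N-i}$ and $B=\sum_i b_ix^iy^{6N-i}$, the monomial $x^iy^{kN-i}$ has $\SL_2$-weight $2i-kN$ under $\lambda$, so the minimum weights of nonvanishing monomials in $A$ and $B$ are $2v_q(A)-4N$ and $2v_q(B)-6N$ respectively, where $v_q$ denotes the order of vanishing at $q$.

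By the Hilbert--Mumford criterion on weighted projective space (normalizing each $\SL_2$-weight by the corresponding $\gg_m$-weight), the pair $(A,B)$ is semistable iff for every 1-PS, i.e.\ every $q\in\p^1$, one has
\[
\tfrac{2v_q(A)-4N}{4}\leq 0 \quad \text{or}\quad \tfrac{2v_q(B)-6N}{6}\leq 0,
\]
equivalently $v_q(A)\leq 2N$ or $v_q(B)\leq 3N$. Failure of semistability is therefore precisely the existence of some $q$ with both $v_q(A)>2N$ and $v_q(B)>3N$. The entirely parallel argument with strict replaced by weak inequalities gives the criterion for failure of stability. The main subtlety is the correct handling of the weighted linearization, namely normalizing the $\SL_2$-weights by the $\gg_m$-weights $(4,6)$; once this is in place, the numerical criterion translates directly into the stated vanishing conditions at a single point of $\p^1$.
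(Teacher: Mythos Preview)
The paper does not supply its own proof of this proposition; it is quoted from Miranda, whose argument is precisely the Hilbert--Mumford computation you outline. Your proof is correct, including the key step you flag: on the weighted projective quotient the Hilbert--Mumford weight of a one-parameter subgroup of $\SL_2$ is computed as the minimum, over nonvanishing coordinates, of the $\SL_2$-weight divided by the $\gg_m$-weight, and this is exactly what reduces the numerical criterion to comparing $v_q(A)$ and $v_q(B)$ with $2N$ and $3N$.
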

From Lemma \ref{conditions} and Proposition \ref{stability}, we see that as long as $N\geq 2$, points in $T_N$ are stable, and thus $E_N:=T_N//\SL_2\times \gg_m$ is a coarse moduli space for Weierstrass fibrations with fundamental invariant $N$. In particular, the natural morphism
\[
\E_N:=[T_N/\SL_2\times \gg_m]\rightarrow E_N
\]
from the quotient stack to the GIT quotient is a coarse moduli space morphism.

In Section 4, it will be useful for us to work on a stack $\W_N$ of Weierstrass fibrations with fundamental invariant $N$, not just the coarse moduli space constructed by Miranda. This stack is not the stack $\E_N$ defined above, but it is closely related as we will now explain. The stack $\W_N$ was recently defined in work of Park--Schmitt \cite{ParkSchmitt}, and we will briefly recall their construction.
\begin{definition}
Let $S$ be a scheme. A family of Weierstrass fibrations over $S$ is given by the data
\[
\mathcal{X}\xrightarrow{p} \P\xrightarrow{\gamma} S, \P\xrightarrow{s} \mathcal{X}
\]
where 
\begin{enumerate}
    \item $\gamma$ is a smooth, proper morphism locally of finite type, with geometric fibers isomorphic to $\p^1$,
    \item $p$ is a proper map with section $s$,
    \item the fibers $(\mathcal{X}_t\rightarrow \P_t, \P_t\rightarrow \X_t)$ on geometric points $t\in S$ are Weierstrass fibrations.
\end{enumerate}
\end{definition}
Park--Schmitt \cite{ParkSchmitt} define $\W$ to be the moduli stack whose objects over $S$ are families of Weierstrass fibrations over $S$ with morphisms over $T\rightarrow S$ given by fiber diagrams. The stack $\W_N$ is the open and closed substack parametrizing Weierstrass fibrations with fundamental invariant $N$. Finally, we consider the open substacks $\W_{\mathrm{min},N}\subset \W_{N}$ of Weierstrass fibrations satisfying the two conditions from Lemma \ref{conditions}. These stacks parametrize the Weierstrass fibrations with fundamental invariant $N$ that resolve to minimal elliptic surfaces. By \cite[Theorem 1.2]{ParkSchmitt}, the stacks $\W_{\mathrm{min},N}$ are smooth, separated Deligne-Mumford stacks for $N\geq 2$, and by \cite[Theorem 1.4]{ParkSchmitt}, $E_N$ is a coarse moduli space for $\W_{\mathrm{min},N}$

We now have three spaces of interest: $\E_N$, $\W_{\mathrm{min},N}$ and $E_N$. We want to compare their Chow rings. 
\begin{prop}\label{samechow}
The Chow rings of $\E_N$, $\W_{\mathrm{min},N}$ and $E_N$ are isomorphic.
\end{prop}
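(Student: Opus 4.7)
The plan is to deduce all three isomorphisms from the standard fact (due to Vistoli) that if $\mathcal{X}$ is a smooth separated Deligne--Mumford stack of finite type over $k$ with quasi-projective coarse moduli space $X$, then the coarse moduli morphism $\mathcal{X}\rightarrow X$ induces an isomorphism $A^*(X)\cong A^*(\mathcal{X})$ on rational Chow rings. Thus it suffices to exhibit $\E_N$ and $\W_{\mathrm{min},N}$ as smooth separated DM stacks sharing the common coarse moduli space $E_N$.

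First, I would verify that $\E_N$ is a smooth separated Deligne--Mumford stack whose coarse moduli space is $E_N$. The scheme $T_N\subset V_{4N}\oplus V_{6N}$ is open in an affine space, hence smooth. By Proposition \ref{stability} together with the assumption $N\geq 2$, every point of $T_N$ is stable for the action of $\SL_2\times\gg_m$, so every stabilizer is finite and the action is proper. The quotient stack $\E_N=[T_N/(\SL_2\times\gg_m)]$ is therefore smooth, separated, and Deligne--Mumford, and its coarse moduli space is the geometric GIT quotient $E_N=T_N//(\SL_2\times\gg_m)$, as already noted in the excerpt.

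By \cite[Theorem 1.2]{ParkSchmitt}, $\W_{\mathrm{min},N}$ is itself a smooth separated Deligne--Mumford stack, and by \cite[Theorem 1.4]{ParkSchmitt} it also has $E_N$ as its coarse moduli space.

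Applying Vistoli's theorem to the two coarse moduli morphisms $\E_N\rightarrow E_N$ and $\W_{\mathrm{min},N}\rightarrow E_N$ then yields
\[
A^*(\E_N)\cong A^*(E_N)\cong A^*(\W_{\mathrm{min},N}),
\]
which is the desired chain of isomorphisms. There is no substantial obstacle here: the technical content is absorbed into \cite{ParkSchmitt} and Miranda's GIT construction of $E_N$, after which the result is a formal consequence of the rational Chow comparison for smooth separated DM stacks and their coarse moduli spaces. The one item worth double-checking in writing is that $E_N$ is quasi-projective, which is immediate since it is a GIT quotient of an open subscheme of an affine space by a reductive group.
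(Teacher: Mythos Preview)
Your proposal is correct and follows essentially the same approach as the paper: both argue that $\E_N$ and $\W_{\mathrm{min},N}$ are smooth separated Deligne--Mumford stacks with common coarse moduli space $E_N$, and then invoke Vistoli's result \cite[Proposition 6.1]{Vistoli} to conclude that all three rational Chow rings agree. Your write-up is slightly more explicit in verifying the DM and quasi-projectivity hypotheses, but there is no substantive difference.
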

\begin{proof}
The space $E_N$ is a coarse moduli space for both stacks $\E_N$ and $\W_{\mathrm{min},n}$. Therefore, since we are using rational coefficients, all three Chow rings are isomorphic by a result of Vistoli \cite[Proposition 6.1]{Vistoli}.    
\end{proof}
\begin{rem}
The difference between the stacks $\W_{\mathrm{min},N}$ and $\E_N$ is that $\E_N$ is a $\mu_2$-banded gerbe over $\W_{\mathrm{min},N}$. The gerbe structure arises from the map $\BSL_2\rightarrow \BPGL_2$.
\end{rem}
\section{Computing the Chow ring}

By Proposition \ref{samechow}, it suffices to compute $A^*(\E_N)$ in order to prove Theorem \ref{main}. Let $\Delta_N\subset V_{4N}\oplus V_{6N}$ denote the complement of $T_N$. We have the excision exact sequence 
\begin{equation}\label{excision}
    A_{*}([\Delta_N/\SL_2\times \gg_m])\rightarrow A^*([V_{4N}\oplus V_{6N}/\SL_2\times \gg_m])\rightarrow A^*(\E_N)\rightarrow 0.
\end{equation}
We want to study the image of $A_*([\Delta_N/\SL_2\times \gg_m])$ in $A^*([V_{4N}\oplus V_{6N}/\SL_2\times \gg_m])$. 

We begin with background information on the stack $[V_{4N}\oplus V_{6N}/\SL_2\times \gg_m]$. The stack $\BSL_2$ is the classifying stack for rank $2$ vector bundles with trivial first Chern class. Let $\V$ denote the universal rank $2$ vector bundle with trivial first Chern class over $\BSL_2$. Set $c_2:=c_2(\V)$. Similarly, the stack $\BGM$ is the classifying stack for line bundles. Let $\M$ denote the universal line bundle over $\BGM$. Set $a_1:=c_1(\M)$. By abuse of notation, we will not distinguish between $\V$, $\M$, $c_2$, and $a_1$ and their pullbacks to the product $\BSL_2\times \BGM$ under the natural projection maps. We will interpret the stack $\BSL_2\times \BGM$ as the stack of line bundles of relative degree $N$ on $\p^1$-bundles as in \cite{Larson} as follows. Consider the universal $\p^1$-bundle 
\[
\gamma:\p(\V)\rightarrow \BSL_2\times \BGM.
\]
Fix $N\geq 0$ and set $\L:=\gamma^*\M(N)$, the universal relative degree $N$ line bundle on $\p(\V)$.
\begin{lem}\label{totspace}\leavevmode
\begin{enumerate}
    \item The stack $[V_{4N}\oplus V_{6N}/\SL_2\times \gg_m]$ is the total space of the vector bundle $\gamma_*(\L^{\otimes 4}\oplus \L^{\otimes 6})$ on $\BSL_2\times \BGM$.
    \item There is an isomorphism of graded rings
    \[
    A^*([V_{4N}\oplus V_{6N}/\SL_2\times \gg_m])\cong \qq[a_1,c_2],
    \]
    with $a_1$ in degree $1$ and $c_2$ in degree $2$.
\end{enumerate}
\end{lem}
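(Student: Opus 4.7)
The plan is to recognize the quotient stack $[V_{4N}\oplus V_{6N}/\SL_2\times\gg_m]$ as the total space of a natural vector bundle on $\BSL_2\times\BGM$, and then to read off its Chow ring using standard equivariant computations.

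For part (1), I would start by viewing $\BSL_2\times\BGM$ as the moduli stack of pairs $(\V,\M)$, with universal rank-$2$ bundle of trivial determinant $\V$ and universal line bundle $\M$. Under the subspace convention, the pushforward along a projectivization satisfies $\gamma_*\O_{\p(\V)}(d)=\Sym^d\V^\vee$ for $d\geq 0$, so the projection formula gives
\[
\gamma_*\L^{\otimes k}=\gamma_*\bigl(\gamma^*\M^{\otimes k}\otimes\O_{\p(\V)}(kN)\bigr)=\M^{\otimes k}\otimes\Sym^{kN}\V^\vee.
\]
Passing to the underlying $\SL_2\times\gg_m$-representation (the fiber over a $k$-point of $\BSL_2\times\BGM$), one recovers $\Sym^{kN}(k^2)^\vee\cong H^0(\p^1,\O(kN))=V_{kN}$, on which $\SL_2$ acts via its standard representation and $\gg_m$ acts with weight $k$. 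For $k=4$ and $k=6$ these weights coincide with those of Miranda's scaling action on Weierstrass data, arising from $(x,y)\mapsto(\lambda^2 x,\lambda^3 y)$. Since the total space of a vector bundle on a classifying stack $\B G$ corresponding to a $G$-representation $W$ is canonically $[W/G]$, this identifies $[V_{4N}\oplus V_{6N}/\SL_2\times\gg_m]$ with the total space of $\gamma_*(\L^{\otimes 4}\oplus\L^{\otimes 6})$.

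For part (2), I would invoke two standard facts. First, the equivariant Chow rings $A^*(\BSL_2)=\qq[c_2]$ and $A^*(\BGM)=\qq[a_1]$ are well known (Totaro; Edidin--Graham), and the K\"unneth formula for classifying stacks of products yields $A^*(\BSL_2\times\BGM)=\qq[a_1,c_2]$ with $a_1$ in degree $1$ and $c_2$ in degree $2$. Second, the projection from the total space of a vector bundle to its base is a flat morphism inducing an isomorphism on Chow rings (homotopy invariance, valid equivariantly by Edidin--Graham). Combining these with part (1) gives the stated isomorphism of graded rings.

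The main subtlety is keeping track of conventions: one must verify that the subspace convention really produces $\Sym^{kN}\V^\vee$ (rather than $\Sym^{kN}\V$), and that the weights $(4,6)$ on $\gg_m$ coming intrinsically from the tensor factors $\M^{\otimes 4},\M^{\otimes 6}$ match the weights of the action Miranda puts on $T_N\subset V_{4N}\oplus V_{6N}$. Both of these are mechanical to check but essential, since the identification of $a_1=c_1(\M)$ with a class on $\E_N$ will be used throughout Section 3 when producing and interpreting the relations generating $I_N$.
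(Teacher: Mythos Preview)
Your proposal is correct and follows essentially the same approach as the paper: identify the fibers of $\gamma_*(\L^{\otimes 4}\oplus\L^{\otimes 6})$ with $V_{4N}\oplus V_{6N}$ (the paper invokes cohomology and base change, you spell out the projection formula computation), then apply homotopy invariance together with the standard description of $A^*(\BSL_2\times\BGM)$ due to Totaro. Your added remarks on matching the $\gg_m$-weights with Miranda's scaling action and on the subspace convention are useful sanity checks that the paper leaves implicit.
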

\begin{proof}
Part (1) follows from cohomology and base change. Indeed, the fibers of $\gamma_*(\L^{\otimes 4}\oplus \L^{\otimes 6})$ are canonically identified with $V_{4N}\oplus V_{6N}$, and the higher cohomology vanishes. For part (2), we note that by part (1) and the homotopy property for Chow rings, there is an isomorphism
\[
A^*([V_{4N}\oplus V_{6N}/\SL_2\times \gg_m])\cong A^*(\BSL_2\times \BGM).
\]
A standard calculation in equivariant intersection theory \cite[Section 15]{Totaro} shows that \[
A^*(\BSL_2\times \BGM)\cong \qq[a_1,c_2]\] as graded rings.
\end{proof}

\subsection{Computing the ideal of relations}
By Lemma \ref{totspace}, the exact sequence \eqref{excision} can be rewritten as
\begin{equation}
    A_*([\Delta_N/\SL_2\times \gg_m])\rightarrow \qq[a_1,c_2]\rightarrow A^*(\E_N)\rightarrow 0.
\end{equation}
It follows that $A^*(\E_N)$, and hence $A^*(E_N)$, is a quotient of $\qq[a_1,c_2]$ by the ideal $I_N$ generated by the image of $A_*([\Delta_N/\SL_2\times \gg_m])$.

Lemma \ref{conditions} tells us exactly when a pair $(A,B)\in V_{4N}\oplus V_{6N}$ is contained in $\Delta_N$. We write $\Delta_N=\Delta_N^{1}\cup \Delta_{N}^{2}$, where $\Delta_{N}^{1}$ parametrizes the pairs of forms $(A,B)$ such that $4A^3+27B^2$ is identically zero (corresponding to Lemma \ref{conditions} part (1)), and $\Delta_{N}^{2}$ parametrizing pairs of forms $(A,B)$ such that $v_q(A)\geq 4$ or $v_q(B)\geq 6$ for some point $p\in \p^1$ (corresponding to Lemma \ref{conditions} part (2)). First, we will determine the relations obtained from excising the pairs $(A,B)\in \Delta_{N}^{2}$. To do so, we need to introduce bundles of principal parts. We will follow the treatment in \cite{EH}.

Let $b:Y\rightarrow Z$ be a smooth proper morphism. Let $\Delta_{Y/Z}\subset Y \times_Z Y$ be the relative diagonal. With $p_1$ and $p_2$ the projection maps, we obtain the following commutative diagram:
\[
\begin{tikzcd}
\Delta_{Y/Z} \arrow[rd] \arrow[rrd, bend left] \arrow[rdd, bend right] &                                      &                  \\
                                                                 & Y \times_{Z} Y \arrow[d, "p_2"'] \arrow[r, "p_1"] & Y \arrow[d, "b"] \\
                                                                 & Y \arrow[r, "b"']                    & Z.               
\end{tikzcd}
\]
\begin{definition}
Let $\F$ be a vector bundle on $Y$ and let $\mathcal{I}_{\Delta_{Y/Z}}$ denote the ideal sheaf of the diagonal in $Y \times_Z Y$. The bundle of relative $m^{\text{th}}$ order principal parts $P^m_{b}(\V)$ is defined as
\[
P^m_{b}(\F)=p_{2*}(p_1^*\F\otimes \mathcal{O}_{Y \times_Z Y}/\mathcal{I}_{\Delta_{Y/Z}}^{m+1}).
\]
\end{definition}
The following explains all the basic properties of bundles of principal parts that we need.
\begin{prop}[Theorem 11.2 in \cite{EH}]\label{parts}
With notation as above,
\begin{enumerate}
    \item There is an isomorphism $b^*b_*\F\xrightarrow{\sim} p_{2*}p_1^*\F$.
    \item The quotient map $p_1^*\F\rightarrow p_1^*\F\otimes \mathcal{O}_{Y \times_Z Y}/\mathcal{I}_{\Delta_{Y/Z}}^{m+1}$ pushes forward to a map
    \[
    b^*b_*\F\cong p_{2*}p_1^*\F\rightarrow P^{m}_{b}(\F),
    \]
    which, fiber by fiber, associates to a global section $\delta$ of $\F$ a section $\delta'$ whose value at $z\in Z$ is the restriction of $\delta$ to an $m^{\text{th}}$ order neighborhood of $z$ in the fiber $b^{-1}b(z)$.
    \item \label{filtration} $P^0_{b}(\F)=\F$. For $m>1$, the filtration of the fibers $P^m_{b}(\F)_y$ by order of vanishing at $y$ gives a filtration of $P^m_{b}(\F)$ by subbundles that are kernels of the natural surjections
    $P^m_{b}(\F)\rightarrow P^k_{b}(\F)$ for $k<m$. The graded pieces of the filtration are identified by the exact sequences
    \[
    0\rightarrow \F\otimes \Sym^m(\Omega_{Y/Z})\rightarrow P^m_{b}(\F)\rightarrow P^{m-1}_{b}(\F)\rightarrow 0.
    \]
\end{enumerate}
\end{prop}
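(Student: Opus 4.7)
The proof divides naturally along the three parts of the statement, each handled by a standard technique from the geometry of the relative diagonal $\Delta_{Y/Z} \subset Y \times_Z Y$.

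For part (1), I would invoke flat base change applied to the Cartesian square displayed in the excerpt. Since $b$ is smooth and proper, it is flat, so the canonical morphism $b^* b_* \F \xrightarrow{\sim} p_{2*} p_1^* \F$ is an isomorphism.

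For part (2), the map is built by applying $p_{2*}$ to the canonical quotient $p_1^*\F \to p_1^*\F \otimes \O_{Y \times_Z Y}/\I_{\Delta_{Y/Z}}^{m+1}$ and composing with the isomorphism from part (1). The geometric interpretation is clear from unwinding definitions: the scheme-theoretic fiber $p_2^{-1}(y)$ is canonically identified via $p_1$ with $b^{-1}(b(y))$, under which $\I_{\Delta_{Y/Z}}|_{p_2^{-1}(y)}$ becomes the maximal ideal at $y$, and hence its $(m+1)$st power cuts out the desired $m$th-order neighborhood.

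Part (3) is the heart of the argument. The case $m=0$ is immediate because $p_2|_{\Delta_{Y/Z}}$ is an isomorphism and $p_1$ restricts to the same isomorphism. For the graded pieces, I would tensor the short exact sequence
\[
0 \to \I_{\Delta_{Y/Z}}^m / \I_{\Delta_{Y/Z}}^{m+1} \to \O_{Y \times_Z Y} / \I_{\Delta_{Y/Z}}^{m+1} \to \O_{Y \times_Z Y} / \I_{\Delta_{Y/Z}}^{m} \to 0
\]
with the flat sheaf $p_1^*\F$ and push forward along $p_2$. Because all three terms are supported on $\Delta_{Y/Z}$, on which $p_2$ restricts to an isomorphism, higher direct images vanish and the sequence remains exact after pushforward. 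To identify the leftmost term I would use two consequences of the smoothness of $b$: first, $\I_{\Delta_{Y/Z}}/\I_{\Delta_{Y/Z}}^2 \cong \Omega_{Y/Z}$, and second, the regularity of the diagonal immersion yields $\I_{\Delta_{Y/Z}}^m / \I_{\Delta_{Y/Z}}^{m+1} \cong \Sym^m \Omega_{Y/Z}$. Combined with $p_1^*\F|_{\Delta_{Y/Z}} \cong \F$ (via the same identification of $\Delta_{Y/Z}$ with $Y$), this produces the claimed graded piece $\F \otimes \Sym^m \Omega_{Y/Z}$.

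The main technical obstacle is to verify that the filtration pieces are honest subbundles rather than merely coherent subsheaves. This will follow by induction on $m$: smoothness of $b$ makes each $\Sym^m \Omega_{Y/Z}$ locally free, so the short exact sequence above propagates local freeness from $P^{m-1}_b(\F)$ to $P^m_b(\F)$, and consequently the kernel of every surjection $P^m_b(\F) \to P^k_b(\F)$ has constant rank and is a genuine subbundle.
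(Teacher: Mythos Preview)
The paper does not supply its own proof of this proposition: the statement is quoted verbatim from \cite{EH} (it is labeled ``Theorem 11.2 in \cite{EH}'' in the proposition header) and used as a black box. Your outline is the standard argument one finds in that reference, and each step is sound: flat base change for (1), the fiberwise unwinding for (2), and for (3) the conormal exact sequence of the regular embedding $\Delta_{Y/Z}\hookrightarrow Y\times_Z Y$ together with the identification $\I_{\Delta_{Y/Z}}/\I_{\Delta_{Y/Z}}^2\cong\Omega_{Y/Z}$ and the graded-piece isomorphism $\I^m/\I^{m+1}\cong\Sym^m(\I/\I^2)$ valid for regular immersions. The inductive check that each $P^m_b(\F)$ is locally free is exactly what one needs to justify speaking of subbundles. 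So there is nothing to compare against in the paper itself; your proof is correct and matches the cited source.
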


By $(2)$ of Proposition \ref{parts}, there is a morphism
\[
\psi: \gamma^*\gamma_*(\L^{\otimes 4}\oplus \L^{\otimes 6})\rightarrow P^3_{\gamma}(\L^{\otimes 4})\oplus P^5_{\gamma}(\L^{\otimes 6})
\]
which, along points in the $\p^1$ fibers, sends $A$ (respectively, $B$) to a third (respectively, fifth) order neighborhood. The kernel of this map therefore parametrizes the triples $(A,B,q)$ such that $v_q(A)\geq 4$ and $v_q(B)\geq 6$. Looking fiber-by-fiber, one sees that the map $\psi$ is surjective. Therefore, the kernel $K$ of $\psi$ is a vector bundle. We obtain the following commutative diagram where $\phi, \phi'$ and $\phi''$ are vector bundle morphisms.
\begin{equation}\label{commutative}
\begin{tikzcd}
K \arrow[r, "i", hook] \arrow[rd, "\phi''"'] & \gamma^*\gamma_*(\L^{\otimes 4}\oplus \L^{\otimes 6}) \arrow[r, "\gamma'"] \arrow[d, "\phi'"'] & \gamma_*(\L^{\otimes 4}\oplus \L^{\otimes 6}) \arrow[d, "\phi"] \\
                                             & \p(\V) \arrow[r, "\gamma"]                                                                       & \BSL_2\times \BGM.                                              
\end{tikzcd}
\end{equation}
By construction, $K$ maps properly and surjectively onto $[\Delta_{N}^{2}/\SL_2\times \gg_m]$ under the identification of $\gamma_*(\L^{\otimes 4}\oplus \L^{\otimes 6})$ with $[V_{4N}\oplus V_{6N}/\SL_2\times\gg_m]$ from Lemma \ref{totspace}. Consequently, the images of the push forward maps
\[
\gamma'_*i_*:A_*(K)\rightarrow A^*(\gamma_*(\L^{\otimes 4}\oplus \L^{\otimes 6}))=A^*([V_{4N}\oplus V_{6N}/\BSL_2\times \BGM])
\]
and
\[
A_{*}([\Delta^{2}_N/\SL_2\times \gg_m])\rightarrow A^*([V_{4N}\oplus V_{6N}/\SL_2\times \gg_m])
\]
are the same.
\begin{prop}\label{relations}
Let $z$ denote the hyperplane class of $\p(\V)$.
The image of the push forward map $\gamma'_*i_*:A^*(K)\rightarrow A^*(\gamma_*(\L^{\otimes 4}\oplus \L^{\otimes 6}))$ is the ideal generated by the two classes
\begin{enumerate}
    \item $\phi^*\gamma_*(c_{\text{top}}(P^3_{\gamma}(\L^{\otimes 4})\oplus P^5_{\gamma}(\L^{\otimes 6})))$, and
    \item $\phi^*\gamma_*(c_{\text{top}}(P^3_{\gamma}(\L^{\otimes 4})\oplus P^5_{\gamma}(\L^{\otimes 6}))\cdot z)$.
\end{enumerate}
\end{prop}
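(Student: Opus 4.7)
The plan is to realize $K$ as the zero locus of a transverse section of a vector bundle on $W := \gamma^*\gamma_*(\L^{\otimes 4} \oplus \L^{\otimes 6})$, and then use the projection formula together with flat base change to describe the image of $\gamma'_* i_*$. On $W$ there is a tautological section of $\phi'^* W$, and composing it with the pullback of $\psi$ produces a section $s$ of $E := \phi'^*(P^3_\gamma(\L^{\otimes 4}) \oplus P^5_\gamma(\L^{\otimes 6}))$ whose zero locus is exactly $K$. Since $\psi$ is already known to be surjective, $K$ is a subbundle of $W$ of codimension $\rank(E) = 10$, so $i: K \hookrightarrow W$ is a regular embedding with $i_*[K] = c_{\text{top}}(E) = \phi'^* \tau$, where $\tau := c_{\text{top}}(P^3_\gamma(\L^{\otimes 4}) \oplus P^5_\gamma(\L^{\otimes 6})) \in A^*(\p(\V))$.

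Next, since $\phi'': K \to \p(\V)$ is a vector bundle, homotopy invariance gives $A^*(K) = \phi''^* A^*(\p(\V))$, so every class in $A^*(K)$ has the form $\phi''^*\beta = i^*\phi'^*\beta$ for some $\beta \in A^*(\p(\V))$. The projection formula for the closed embedding $i$ then yields
$$ i_*(i^*\phi'^*\beta) = \phi'^*\beta \cdot i_*[K] = \phi'^*(\beta \tau). $$
Because $\gamma$ is proper and $\phi: T := \gamma_*(\L^{\otimes 4} \oplus \L^{\otimes 6}) \to B := \BSL_2 \times \BGM$ is flat, flat base change applied to the right-hand square of \eqref{commutative} gives $\gamma'_* \phi'^* = \phi^* \gamma_*$. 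Hence the image of $\gamma'_* i_*$ is $\{\phi^* \gamma_*(\beta\tau) : \beta \in A^*(\p(\V))\}$.

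To finish, I would use that $c_1(\V) = 0$, so $A^*(\p(\V)) \cong A^*(B)[z]/(z^2 + \gamma^* c_2)$ is free of rank two over $A^*(B)$ with basis $\{1, z\}$. Writing $\beta = \gamma^*\alpha_0 + \gamma^*\alpha_1 \cdot z$ and applying the projection formula for $\gamma$ gives $\gamma_*(\beta\tau) = \alpha_0 \gamma_*\tau + \alpha_1 \gamma_*(z\tau)$. Since $\phi^*: A^*(B) \to A^*(T)$ is an isomorphism by homotopy invariance, the image is precisely the ideal generated by $\phi^*\gamma_*\tau$ and $\phi^*\gamma_*(z\tau)$, as claimed. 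The main obstacle I expect is checking cleanly that $s$ cuts out $K$ as a regular (i.e.\ transverse) zero locus so that no excess intersection correction appears; this reduces to the assertion that $K$ is a subbundle of $W$ of the expected rank, which is exactly the content of the surjectivity of $\psi$ already recorded above.
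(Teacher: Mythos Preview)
Your argument is correct and is essentially the same as the paper's: you use homotopy invariance to write every class on $K$ as $i^*\phi'^*\beta$, apply the projection formula to get $\phi'^*(\beta\cdot\tau)$ with $\tau=c_{\text{top}}(P^3_\gamma(\L^{\otimes 4})\oplus P^5_\gamma(\L^{\otimes 6}))$, use the Cartesian square to rewrite $\gamma'_*\phi'^*=\phi^*\gamma_*$, and then expand $\beta$ in the basis $\{1,z\}$. The only difference is cosmetic---you phrase the identification $[K]=\phi'^*\tau$ via a tautological section cutting out $K$ transversely, whereas the paper phrases it as $K$ being the kernel of the surjective bundle map $\psi$; these are the same statement, and your concern about excess intersection is already handled by the surjectivity of $\psi$.
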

\begin{proof}
Let $\alpha\in A^*(K)$. Then because $K$ is a vector bundle over $\p(\V)$, we see that $\alpha=\phi''^*(\beta)$ for some class $\beta\in A^*(\p (\V))$, so we have
\[
\alpha=\phi''^*(\beta)=i^*\phi'^*(\beta).
\]
Pushing forward, we obtain
\[
\gamma'_*i_*\alpha=\gamma'_*i_*i^*\phi'^*(\beta)=\gamma'_*([K]\cdot \phi'^*\beta).
\]
Because $K$ is the kernel of the vector bundle morphism 
\[
\psi:\gamma^*\gamma_*(\L^{\otimes 4}\oplus \L^{\otimes 6})\rightarrow P^3_{\gamma}(\L^{\otimes 4})\oplus P^5_{\gamma}(\L^{\otimes 6}),
\]
the fundamental class $[K]$ is given by $\phi'^*(c_{\text{top}}(P^3_{\gamma}(\L^{\otimes 4})\oplus P^5_{\gamma}(\L^{\otimes 6})))$. Because the square in the commutative diagram \eqref{commutative} is Cartesian, $\gamma'_*\phi'^*=\phi^*\gamma_*$, so 
\[
\gamma'_*i_*\alpha=\phi^*\gamma_*(c_{\text{top}}(P^3_{\gamma}(\L^{\otimes 4})\oplus P^5_{\gamma}(\L^{\otimes 6}))\cdot \beta).
\]
Because $\p (\V)$ is a projective bundle, $\beta$ can be written as
\[
\beta=\gamma^*\beta_1+\gamma^*\beta_2z,
\]
where $\beta_1$ and $\beta_2$ are classes in $A^*(\BSL_2\times \BGM)$. The statement of the proposition follows.
\end{proof}
\begin{rem}
The relations from Proposition \ref{relations} can be computed explicitly as polynomials of $a_1$, $c_2$, and $N$ using the splitting principle and Proposition \ref{parts}. We carried out this computation in Macaulay2 \cite{M2} using the package Schubert2 \cite{S2}.

\begin{equation*}\medmath{\begin{split}
        \phi^*\gamma_*(c_{\text{top}}(P^3_{\gamma}(\L^{\otimes 4})\oplus P^5_{\gamma}(\L^{\otimes 6})))&=119439360N^9c_2^4a_1-859963392N^8c_2^4a_1-1433272320N^7c_2^3a_1^3\\&+2598469632N^7c_2^4a_1
        +8026324992N^6c_2^3a_1^3+3009871872N^5c_2^2a_1^5\\&-4277919744N^6c_2^4a_1-18189287424N^5c_2^3a_1^3-12039487488N^4c_2^2a_1^5\\&-1433272320N^3c_2a_1^7+4164009984N^5c_2^4a_1+21389598720N^4c_2^3a_1^3\\&+18189287424N^3c_2^2a_1^5+3439853568N^2c_2a_1^7+119439360Na_1^9\\&-2427125760N^4c_2^4a_1-13880033280N^3c_2^3a_1^3-12833759232N^2c_2^2a_1^5\\&-2598469632Nc_2a_1^7-95551488a_1^9+813809664N^3c_2^4a_1\\&+4854251520N^2c_2^3a_1^3+4164009984Nc_2^2a_1^5+611131392c_2a_1^7\\&-139567104N^2c_2^4a_1-813809664Nc_2^3a_1^3-485425152c_2^2a_1^5\\&+8847360Nc_2^4a_1+46522368c_2^3a_1^3.
\end{split}}
\end{equation*}

\begin{equation*}\medmath{
    \begin{split}
        \phi^*\gamma_*(c_{\text{top}}(P^3_{\gamma}(\L^{\otimes 4})\oplus P^5_{\gamma}(\L^{\otimes 6}))\cdot z)&=-11943936N^{10}c_2^5+95551488N^9c_2^5+537477120N^8c_2^4a_1^2\\&-324808704N^8c_2^5-3439853568N^7c_2^4a_1^2-2508226560N^6c_2^3a_1^4\\&+611131392N^7c_2^5+9094643712N^6c_2^4a_1^2+12039487488N^5c_2^3a_1^4\\&+2508226560N^4c_2^2a_1^6-694001664N^6c_2^5-12833759232N^5c_2^4a_1^2\\&-22736609280N^4c_2^3a_1^4-8026324992N^3c_2^2a_1^6-537477120N^2c_2a_1^8\\&+485425152N^5c_2^5+10410024960N^4c_2^4a_1^2+21389598720N^3c_2^3a_1^4\\&+9094643712N^2c_2^2a_1^6+859963392Nc_2a_1^8+11943936a_1^{10}\\&-203452416N^4c_2^5-4854251520N^3c_2^4a_1^2-10410024960N^2c_2^3a_1^4\\&-4277919744Nc_2^2a_1^6-324808704c_2a_1^8+46522368N^3c_2^5\\&+1220714496N^2c_2^4a_1^2+2427125760Nc_2^3a_1^4+694001664c_2^2a_1^6\\&-4423680N^2c_2^5-139567104Nc_2^4a_1^2-203452416c_2^3a_1^4+4423680c_2^4a_1^2.
    \end{split}}
\end{equation*}
Simplifying, we have that the ideal that these two classes generate is the ideal generated by the following two polynomials, $p_1$ and $p_2$.
\begin{equation*}
\medmath{
    \begin{split}
     p_1&=(1620N-1296)a_1^9+(-19440N^3+46656N^2-35244N+8289)a_1^7c_2\\&+(40824N^5-163296N^4+246708N^3-174069N^2+56478N-6584)a_1^5c_2^2\\&+(-19440N^7+108864N^6-246708N^5+290115N^4-188260N^3+65840N^2-11038N+631)a_1^3c_2^3\\&+(1620N^9-11664N^8+35244N^7-58023N^6+56478N^5-32920N^4+11038N^3-1893N^2+120N)a_1c_2^4.
    \end{split}}
\end{equation*}
\begin{equation*}
\medmath{
\begin{split}
    p_2&=324a_1^{10}+(-14580N^2+23328N-8811)a_1^8c_2\\&+(68040N^4-217728N^3+246708N^2-116046N+18826)a_1^6c_2^2\\&+(-68040N^6+326592N^5-616770N^4+580230N^3-282390N^2+65840N-5519)a_1^4c_2^3\\&+(14580N^8-93312N^7+246708N^6-348138N^5+282390N^4-131680N^3+33114N^2-3786N+120)a_1^2c_2^4\\&+(-324N^{10}+2592N^9-8811N^8+16578N^7-18826N^6+13168N^5-5519N^4+1262N^3-120N^2)c_2^5.
\end{split}}
\end{equation*}

\end{rem}
\begin{lem}\label{codimension}
The codimension of $\Delta_N^1$ in $V_{4N}\oplus V_{6N}$ is $8N+1$.
\end{lem}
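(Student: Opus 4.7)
The plan is to identify $\Delta_N^1$ as the image of an explicit polynomial map from a smaller vector space of sections on $\p^1$, and then read off the codimension from a dimension count.

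The key claim is that every pair $(A,B) \in V_{4N} \oplus V_{6N}$ satisfying $4A^3 + 27B^2 \equiv 0$ is of the form $(A,B) = (-3h^2, 2h^3)$ for a unique $h \in V_{2N} := H^0(\p^1, \O(2N))$. To prove this, I would work in the homogeneous coordinate ring $k[x,y]$ of $\p^1$, which is a UFD, regarding $A$ and $B$ as homogeneous forms of degrees $4N$ and $6N$. The identity $A^3 = -\tfrac{27}{4} B^2$ together with unique factorization forces the supports of $A$ and $B$ to coincide, with every irreducible factor of $A$ appearing to even multiplicity and every irreducible factor of $B$ appearing to a multiplicity divisible by $3$. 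Writing $A = \alpha g^2$ and $B = \beta g^3$ for a common homogeneous $g$ of degree $2N$, the residual equation $4\alpha^3 + 27\beta^2 = 0$ cuts out a cuspidal cubic in $\A^2$ which admits the rational parametrization $(\alpha, \beta) = (-3\tau^2, 2\tau^3)$. Absorbing $\tau$ into $g$ by setting $h := \tau g$ yields the desired presentation $A = -3h^2$, $B = 2h^3$.

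Consequently $\Delta_N^1$ coincides with the image of the morphism
\[
\Phi: V_{2N} \to V_{4N} \oplus V_{6N}, \qquad h \mapsto (-3h^2, 2h^3),
\]
which is injective: given $(A,B) \in \Phi(V_{2N})$ with $A \neq 0$ one recovers $h$ as the rational function $-3B/(2A)$, and $A = 0$ forces $h = 0$. Hence $\dim \Delta_N^1 = \dim V_{2N} = 2N+1$. Since $\dim(V_{4N} \oplus V_{6N}) = (4N+1) + (6N+1) = 10N+2$, the codimension is $(10N+2) - (2N+1) = 8N+1$, as required. The only step requiring care is the factorization argument, but working with bihomogeneous polynomials in $k[x,y]$ (rather than on an affine chart) avoids any subtlety at $\infty$ and makes the identification of $h$ canonical up to sign.
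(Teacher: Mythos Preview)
Your argument is correct and follows essentially the same route as the paper: both use unique factorization to show that $4A^3+27B^2\equiv 0$ forces $A$ and $B$ to be a square and a cube of a common degree-$2N$ form, then read off the codimension from $\dim(V_{4N}\oplus V_{6N})-\dim V_{2N}=8N+1$. Your version is a bit more careful---working homogeneously in $k[x,y]$, explicitly parametrizing the cuspidal cubic $4\alpha^3+27\beta^2=0$ to absorb the scalar into $h$, and verifying injectivity of $\Phi$---whereas the paper works on an affine chart and leaves the parametrization in the form $(aG^2,bG^3)$; note also that your injectivity argument actually pins down $h$ uniquely (not just up to sign), since $h=-3B/(2A)$ when $A\neq 0$.
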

\begin{proof}
Let $t$ be an affine coordinate on $\p^1$. Then we can factor $A(t)$ and $B(t)$ into linear factors as
\[
A(t)=a\prod_{i=1}^{4N}(t-c_i) \textit{ and } B(t)=b\prod_{i=1}^{6N}(t-d_i).
\]
Because $4A^3+27B^2$ is identically zero, we have the equation
\[
4a^3\prod_{i=1}^{4N}(t-c_i)^3=-27b^2\prod_{i=1}^{6N}(t-d_i)^2.
\]
By comparing the orders of vanishing of each side, we see that $A(t)=aG(t)^2$ and $B(t)=bG(t)^3$, where $G$ is a polynomial of degree $2N$ and $4a^3+27b^2=0$. It follows that the codimension of $\Delta^1_{N}$ is given by
\[
\dim (V_{4N}\oplus V_{6N})-\dim V_{2N}=10N+2-2N-1=8N+1.
\]
\end{proof}
We can now complete the proof of Theorem \ref{main}.
\begin{proof}[Proof of Theorem \ref{main}]
By a calculation in Macaulay2 \cite{M2}, the graded ring $\qq[a_1,c_2]/I_N$ vanishes in degree $17$ and higher, where $I_N$ is the ideal generated by the relations from Proposition \ref{relations}. We have the excision exact sequence
\[
A_*([\Delta^{1}_N/\SL_2\times \gg_m])\rightarrow \qq[a_1,c_2]/I_N\rightarrow A^*(\E_N)\rightarrow 0. 
\]
By Lemma \ref{codimension}, the image of 
\[
A_*([\Delta^{1}_N/\SL_2\times \gg_m])\rightarrow \qq[a_1,c_2]/I_N
\]
lies in codimension $17$ or higher, so it is identically zero. Therefore, 
\[
\qq[a_1,c_2]/I_N\cong A^*(\E_N).
\]
This completes the proof of Theorem \ref{main} part (1). Parts (2) and (3) are consequences of part (1) together with a computation in Macaulay2 \cite{M2} that computes the Hilbert Series of the ring $\qq[a_1,c_2]/I_N$ and verifies that the intersection pairing is perfect.
\end{proof}
\begin{proof}[Proof of Corollary \ref{subvariety}]
Miranda's construction of $E_N$ by geometric invariant theory \cite{Miranda} shows that $E_N$ is a quasi-projective variety. It thus admits an ample line bundle $L$. If $S$ is a complete subvariety of dimension $d$, then, because $L$ is ample,
\[
c_1(L)^d\cdot S>0.
\]
Hence, $c_1(L)^d$ is numerically nonzero. By Theorem \ref{main}, it follows that $d\leq 16$. 
\end{proof}
\section{The Tautological Ring}
\subsection{Stacks of lattice polarized K3 surfaces}\label{stacks}
Let $\Lambda \subset U^{\oplus 3}\oplus E_8(-1)^{\oplus 2}$ be a fixed rank $r$ primitive sublattice with signature $(1,r-1)$, and let $v_1,\dots,v_r$ be an integral basis of $\Lambda$. A $\Lambda$-polarization on a K3 surface $X$ is a primitive embedding 
\[
j:\Lambda \hookrightarrow \Pic(X)
\]
such that
\begin{enumerate}
    \item The lattices $H^2(X,\zz)$ and $U^{\oplus 3}\oplus E_8(-1)^{\oplus 2}$ are isomorphic via an isometry restricting to the identity on $\Lambda$, where we view $\Lambda$ as sitting inside $H^2(X,\zz)$ via $\Lambda\hookrightarrow \Pic(X)\hookrightarrow H^2(X,\zz)$.
    \item The image of $j$ contains the class of a quasi-polarization.
\end{enumerate}
Beauville \cite{Beauville} constructed moduli stacks $\F_{\Lambda}$ of $\Lambda$-polarized K3 surfaces, and showed that they are smooth Deligne--Mumford stacks of dimension $19-r$. Using the surjectivity of the period map, one can construct coarse moduli spaces $F_{\Lambda}$ for $\F_{\Lambda}$ \cite{Dolgachev}.

We think of the stacks $\F_{\Lambda}$ as parametrizing families of K3 surfaces
\[
\pi:X\rightarrow S
\]
together with $r$ line bundles $H_1,\dots, H_r$ on $X$ corresponding to the basis $v_1,\dots,v_r$ of $\Lambda$, well-defined up to pullbacks from $\Pic(S)$. Technically, these bundles exist only \'etale locally, as they are defined as sections of the sheaf $\Pic_{X/S}$, which is the \'etale sheafification of the presheaf on the category of schemes over $S$
\[
T\mapsto \Pic(X_T)/\Pic(T).
\]
We will generally suppress this detail, but we will remark when it is important. 
There are forgetful morphisms 
\[
\F_{\Lambda'}\hookrightarrow \F_{\Lambda}
\]
for any lattice $\Lambda\subset \Lambda'$. When $\Lambda$ is strictly contained in $\Lambda'$, we call the subvarieties $\F_{\Lambda'}$ \emph{Noether-Lefschetz loci} of $\F_{\Lambda}$.
\subsection{The tautological ring of $\F_{\Lambda}$}
The stack $\F_{\Lambda}$ comes equipped with a universal K3 surface
\[
\pi_{\Lambda}:\X_{\Lambda}\rightarrow \F_{\Lambda}.
\]
and universal bundles $\H_1,\dots \H_r$, well-defined up to pullbacks from $\F_{\Lambda}$. Let $\T_{\pi_{\Lambda}}$ denote the relative tangent bundle. Following \cite{MOP}, we define the $\kappa$-classes
\[
\kappa^{\Lambda}_{a_1,\dots,a_r,b}:=\pi_{\Lambda*}\left(c_1(\H_1)^{a_1}\cdots c_1(\H_r)^{a_r}\cdot c_2(\T_{\pi_{\Lambda}})^b\right).
\]
\begin{definition}
The tautological ring $R^*(\F_{\Lambda})$ is the subring of $A^*(\F_{\Lambda})$ generated by pushforwards from the Noether--Lefschetz loci of all $\kappa$-classes.
\end{definition}
By \cite{Borcherds} or \cite{FarRim}, the Hodge class $\lambda:=c_1(\pi_{\Lambda*}\omega_{\pi_{\Lambda}})$ lies in the tautological ring $R^*(\F_{\Lambda})$ for all $\Lambda$, as it is supported on Noether--Lefschetz divisors.
\subsection{Moduli of elliptic K3 surfaces and Weierstrass fibrations}
Let $p:X\rightarrow \p^1$ be a minimal elliptic surface over $\p^1$ with fundamental invariant $2$. Then $X$ is a K3 surface, and the class of the fiber $f$ and section $\sigma$ form a primitively embedded lattice $U\subset \Pic(X)$ equivalent to a hyperbolic lattice, whose image contains a quasi-polarization $\sigma+2f$. Conversely, given a K3 surface $X$, a primitive embedding of a hyperbolic lattice $U\hookrightarrow \Pic(X)$ whose image contains a quasi-polarization allows one to define a morphism $p:X\rightarrow \p^1$ with section $s:\p^1\rightarrow X$ with fundamental invariant $2$ \cite[Theorem 2.3]{ClingherDoran}.  Because of this, we call the stack $\F_{U}$ the stack parametrizing elliptic K3 surfaces with section. By \cite[Theorem 7.9]{OdakaOshima}, the coarse moduli space $F_U$ is isomorphic to $E_2$. By the discussion in subsection \ref{stacks}, $\F_U$ comes equipped with a universal K3 surface and two universal line bundles
\[
\pi_U:\X_U\rightarrow \F_U, \quad \O(f)\rightarrow \X_U, \quad \O(\sigma)\rightarrow \X_U.
\]
The intersection matrix of $\O(\sigma)$ and $\O(f)$ is 
\[\begin{bmatrix}
\O(\sigma)^2 & \O(\sigma)\cdot \O(f) \\
\O(\sigma)\cdot \O(f) & \O(f)^2
\end{bmatrix}=\begin{bmatrix}
-2 & 1 \\
1 & 0
\end{bmatrix},\]
which can be obtained by a change of basis from the usual intersection matrix for a hyperbolic lattice $U$:
\[
\begin{bmatrix}
0 & 1 \\
1 & 0
\end{bmatrix}.
\]
We prefer to take $\O(f)$ and $\O(\sigma)$ as our basis because of their geometric meaning. 
Recall that the stack $\Wmin$ parametrizes families of Weierstrass fibrations resolving to minimal elliptic surfaces. We will construct a morphism
\[
G:\F_U\rightarrow \Wmin,
\]
which is a relative version of the morphism sending an elliptic K3 surface to its associated Weierstrass fibration.
Let $\pi:X\rightarrow S$ be a family of $U$-polarized K3 surfaces, equipped with bundles $\O(f)$ and $\O(\sigma)$ on $X$, up to an \'etale cover of $S$. The surjection
\[
\pi^*\pi_*\O(f)\rightarrow \O(f)
\]
defines a morphism
\[
p:X\rightarrow \p(\pi_*\O(f)^{\vee})
\]
over $S$. The relative effective Cartier divisor associated to $\O(\sigma)$ allows us to define a section $s$ of $p$. The surjection
\[
p^*p_*\O(3\sigma)\rightarrow \O(3\sigma)
\]
defines a morphism $i:X\rightarrow \p(p_*\O(3\sigma)^{\vee})$. Let $Y$ denote the image of $X$ under $i$. Then $Y$ is a family of Weierstrass fibrations over $S$. This construction defines the morphism
\[
G:\F_{U}\rightarrow \Wmin.
\]

\begin{rem}
We note that in constructing $Y$, we chose line bundles $\O(f)$ and $\O(\sigma)$. Technically, we could only do so \'etale locally. The projective bundle $\p(\pi_*\O(f)^{\vee})\rightarrow S$ will only descend to a smooth proper morphism, locally of finite type, with geometric fibers isomorphic to $\p^1$: it will not necessarily be the projectivization of a vector bundle on $S$. Second, even once we pass to an \'etale cover, $\O(f)$ and $\O(\sigma)$ are only defined up to pullbacks from $\Pic(S)$. If we made different choices for $\O(f)$ and $\O(\sigma)$ the resulting Weierstrass fibration would be canonically isomorphic to the original one because for any vector bundle $\E$ and line bundle $\L$, $\p(\E\otimes \L)$ is canonically isomorphic to $\p(\E)$. 
\end{rem}

Consider the following Cartesian diagram, which defines the stack $\tilde{\F}_U$.
\[
\begin{tikzcd}
\tilde{\F}_U \arrow[d] \arrow[r, "G'"] & \E_2 \arrow[d] \\
\F_U \arrow[r, "G"]                    & \Wmin         
\end{tikzcd}
\]
The vertical morphisms are $\mu_2$-banded gerbes. In fact, we can explicitly describe the functor of points for $\tilde{\F}_U$. A morphism from a scheme $S$ to $\tilde{\F}_U$ is a family 
\[
(\pi:X\rightarrow S, \O(f), \O(\sigma), \N)
\]
where $(\pi:X\rightarrow S, \O(f), \O(\sigma))$ is a family of $U$-polarized K3 surfaces and $\N$ is a line bundle on $S$ such that
\[
\N^{\otimes 2}\cong \det \pi_*\O(f).
\]
Recall that $\E_2$ has a universal rank $2$ vector bundle with trivial first Chern class $\V$ and a universal line bundle $\M$. By construction of the map $G$ and its base change $G'$, we have that
\[
G'^*\V=\pi_*\O(f)^{\vee}\otimes \N, 
\]
where $\N$ is the universal square root of $\det \pi_*\O(f)$. 
We will abuse notation and denote the universal K3 surface on $\F_U$ and $\tilde{\F}_U$ both by $\pi$.
\begin{lem}\label{c2}
The class $c_2(\pi_*\O(f)^{\vee}\otimes \N)$ on $\tilde{\F}_U$ is the pullback of a tautological class on $\F_U$.
\end{lem}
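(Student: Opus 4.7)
The plan is to identify the class $c_2(\pi_*\O(f)^\vee \otimes \N)$ with a well-defined class on $\F_U$ via the gerbe projection $q: \tilde\F_U\to\F_U$, and then use Grothendieck--Riemann--Roch to show that class is tautological.

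Set $\E = \pi_*\O(f)$. On each geometric fiber, $\O(f)$ is the pullback of $\O_{\p^1}(1)$ under the elliptic fibration, so $h^0 = 2$ and $h^{\geq 1} = 0$; hence, by cohomology and base change, $\E$ is (locally in the \'etale topology) a rank $2$ bundle and the higher direct images vanish. Using $2c_1(\N) = c_1(\det \E) = c_1(\E)$ in rational Chow together with $c_2(\E^\vee) = c_2(\E)$ for rank $2$, the rank $2$ twisting formula gives
\[
c_2(\E^\vee \otimes \N) = c_2(\E^\vee) + c_1(\E^\vee) c_1(\N) + c_1(\N)^2 = c_2(\E) - \tfrac{1}{4} c_1(\E)^2.
\]
Although $\O(f)$ is only defined up to pullback of a line bundle $\L$ from $\F_U$, which replaces $\E$ by $\E\otimes\L$, a direct expansion shows
\[
c_2(\E \otimes \L) - \tfrac{1}{4} c_1(\E\otimes\L)^2 = c_2(\E) - \tfrac{1}{4} c_1(\E)^2,
\]
so this combination descends to a well-defined class $\tau \in A^*(\F_U)$ with $q^*\tau = c_2(\pi_*\O(f)^\vee \otimes \N)$.

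To see that $\tau$ lies in $R^*(\F_U)$, apply Grothendieck--Riemann--Roch to the universal family $\pi: \X_U \to \F_U$:
\[
\ch(\pi_*\O(f)) = \pi_*\bigl(\ch(\O(f)) \cdot \td(\T_\pi)\bigr).
\]
Since the fibers are K3 and $\omega_\pi$ is $\pi$-trivial, the evaluation map $\pi^*\pi_*\omega_\pi \to \omega_\pi$ is an isomorphism, so $\omega_\pi = \pi^*\lambda$ where $\lambda = c_1(\pi_*\omega_\pi)$ is the Hodge class. Hence $c_1(\T_\pi) = -\pi^*\lambda$, and any pure pullback from $\F_U$ pushes to zero by the projection formula. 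Expanding the degree $3$ and degree $4$ components of $\ch(\O(f)) \cdot \td(\T_\pi)$ and applying $\pi_*$, one obtains $c_1(\E)$ and $\ch_2(\E)$ as $\qq$-linear combinations of $\lambda$ and the $\kappa$-classes
\[
\kappa_{a,0,b}^U := \pi_*\bigl(c_1(\O(f))^a \cdot c_2(\T_\pi)^b\bigr).
\]
Using $c_2(\E) = \tfrac{1}{2} c_1(\E)^2 - \ch_2(\E)$, the same conclusion holds for $c_2(\E)$, and hence for $\tau$. Since $\lambda \in R^*(\F_U)$ by \cite{Borcherds,FarRim}, we conclude $\tau \in R^*(\F_U)$.

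The main obstacle is primarily bookkeeping: checking the twist-invariance of $c_2(\E) - \tfrac{1}{4} c_1(\E)^2$, and running GRR through the relevant degrees while tracking terms involving $\pi^*\lambda$. Both reduce to elementary manipulations once the K3-trivialization of $\omega_\pi$ and the projection formula for $\pi_*$ are in hand.
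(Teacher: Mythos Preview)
Your proof is correct and follows essentially the same approach as the paper: compute $c_2(\E^\vee\otimes\N)=c_2(\E)-\tfrac14 c_1(\E)^2$ via the rank~$2$ twist formula and $2c_1(\N)=c_1(\E)$, then apply Grothendieck--Riemann--Roch to $\O(f)$ along $\pi$ to see that the Chern classes of $\E=\pi_*\O(f)$ are tautological. You are in fact a bit more careful than the paper in two places: you explicitly verify twist-invariance so that the class descends along the gerbe $q:\tilde\F_U\to\F_U$, and you handle the $c_1(\T_\pi)$ terms in the Todd class by using $\omega_\pi\cong\pi^*\lambda$ and the projection formula, whereas the paper simply asserts that the right-hand side of GRR is tautological.
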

\begin{proof}
Note that 
\begin{equation*}
    \begin{split}
        c_2(\pi_*\O(f)^{\vee}\otimes \N)&=c_1(\N)^2+c_1(\pi_*\O(f)^{\vee})c_1(\N)+c_2(\pi_*\O(f)^{\vee})\\
        &=\frac{1}{4}c_1(\det \pi_*\O(f))^2-\frac{1}{2}c_1(\pi_*\O(f))c_1(\det\pi_*\O(f))+c_2(\pi_*\O(f))\\
        &=-\frac{1}{4}c_1(\pi_*\O(f))^2+c_2(\pi_*\O(f)).
 \end{split}
\end{equation*}
It thus suffices to show that the Chern classes of $\pi_*\O(f)$ are tautological. By Grothendieck--Riemann--Roch, we have
\[
\ch(\pi_! \O(f))=\pi_*(\ch(\O(f))\cdot \td(T_{\pi})).
\]
By definition, the classes on the right hand side are tautological. We note that
\[
\pi_!\O(f)=\pi_*\O(f)
\]
because $\pi$ is a relative K3 surface. By comparing degree $1$ parts of both sides, we see that $c_1(\pi_*\O(f))$ is tautological. By comparing degree $2$ parts, we see that $c_2(\pi_*\O(f))$ is tautological.
\end{proof}
\begin{proof}[Proof of Theorem \ref{taut}]
Each of the stacks $\E_2$, $\Wmin$, $\F_U$, and $\tilde{\F}_U$ has the same coarse moduli space $E_2$. They thus all have isomorphic Chow rings, and proper push forward $A_*(Z)\rightarrow A_*(E_2)$ is an isomorphism of Chow groups, where $Z$ is any of the four stacks above \cite[Proposition 6.1]{Vistoli}. By Theorem \ref{main}, $A^1(E_2)$ is generated by the push forward of $a_1$. By \cite[Theorem 2.1]{Petersen} or the proof of \cite[Corollary 4.2]{vanderGeerKatsura}, the tautological class $\lambda$ is nonvanishing on $\F_{U}$. It follows that $A^1(\F_U)$ is generated by $\lambda$, so $A^1(\F_U)=R^1(\F_U)$. By Theorem \ref{main}, $A^2(E_2)$ is generated by the push forwards of $a_1^2$ and $c_2$.  By Lemma \ref{c2}, the class $c_2$ pulls back to a class in $A^2(\tilde{\F}_U)$ that is the pullback of a tautological class from $A^2(\F_U)$. It follows that $A^2(\F_U)=R^2(\F_U)$, as the images of $a_1^2$ and $c_2$ in $A^2(E_2)$ can both be obtained by pushing forward tautological classes from $\F_U$. Therefore, $A^*(\F_U)=R^*(\F_U)$. The fact that $A^*(\F_U)=R^*(\F_U)$ is Gorenstein with socle in codimension $16$ follows from Theorem \ref{main}.
\end{proof}

\subsection{Codimension one classes} By Theorems \ref{main} and \ref{taut}, $A^1(\F_U)$ is of rank one and the Hodge class $\lambda$ is a generator. It is natural to ask how to represent $\kappa$-classes explicitly in terms of the Hodge class $\lambda$.

\begin{prop}\label{div}
The following four linear combinations of $\kappa$-classes are independent of the choice of universal line bundles.  Moreover, they are all multiples of the Hodge class $\lambda$.
\[\kappa_{3,0,0}+\frac{1}{4}\kappa_{1,0,1}=\frac{7}{2}\lambda,\quad 3\kappa_{2,1,0}-\frac{1}{4}\kappa_{1,0,1}+\frac{1}{4}\kappa_{0,1,1}=\frac{1}{2}\lambda,\]
\[3\kappa_{1,2,0}-\frac{1}{4}\kappa_{0,1,1}=-3\lambda,\quad \kappa_{0,3,0}=0.\]
where $\kappa_{i,j,k}:=\pi_*\left(c_1(\O(\sigma))^i\cdot c_1(\O(f))^{j}\cdot c_2(\T_{\pi})^k\right)$. 
\end{prop}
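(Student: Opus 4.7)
The plan is to prove the proposition in two independent stages. First, verify that each of the four stated combinations is invariant under the ambiguity in the universal line bundles on $\X_U$, and second, determine its value as a multiple of $\lambda$ by passing to specific representatives and computing explicitly.

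For invariance, I would analyze how the six codimension-one $\kappa$-classes $\kappa_{3,0,0}, \kappa_{2,1,0}, \kappa_{1,2,0}, \kappa_{0,3,0}, \kappa_{1,0,1}, \kappa_{0,1,1}$ transform under the substitutions $c_1(\O(\sigma)) \mapsto c_1(\O(\sigma)) + \pi^*a$ and $c_1(\O(f)) \mapsto c_1(\O(f)) + \pi^*b$ for $a, b \in A^1(\F_U)$, which correspond to twisting the universal line bundles by pullbacks from $\F_U$. Using the fiberwise intersection numbers $\sigma^2=-2$, $\sigma\cdot f=1$, $f^2=0$ on a K3 and $\kappa_{0,0,1}=24$, a binomial expansion combined with the projection formula gives, for instance, $\kappa_{3,0,0}\mapsto\kappa_{3,0,0}-6a$, $\kappa_{1,0,1}\mapsto\kappa_{1,0,1}+24a$, $\kappa_{2,1,0}\mapsto\kappa_{2,1,0}+2a-2b$, and similarly for the rest. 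Invariance then imposes two linear conditions on a combination (killing the coefficients of $a$ and $b$), cutting out a $4$-dimensional subspace of $\qq^6$; the four combinations in the proposition each satisfy these conditions and, being linearly independent, span the subspace.

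For the identification as multiples of $\lambda$, I would adopt the specific representatives $\O(\sigma)=\O_{\X_U}(s(Y))$ and $\O(f)=p^*\O_Y(1)$, where $s:Y\to\X_U$ is the universal section and $p:\X_U\to Y$ the relative elliptic fibration with $q:Y=\pp(\pi_*\O(f)^\vee)\to\F_U$. With these choices the structure-sheaf sequence of the divisor $s(Y)\subset\X_U$ yields $\pi_*\O(\sigma)=\O_{\F_U}$, so $c_1(\pi_*\O(\sigma))=0$, and $\kappa_{0,3,0}$ vanishes immediately because $\pi_*(p^*\tilde h^3)=q_*(\tilde h^3\cdot p_*1)=0$ by dimension of $p$. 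The key nontrivial computation is $\kappa_{1,0,1}=q_*c_2(s^*\T_\pi)$, using $\sigma=s_*[Y]$: applying the normal sequence $0\to\T_{Y/\F_U}\to s^*\T_\pi\to N_{s(Y)/\X_U}\to 0$, the relative Euler sequence for $q$, and adjunction, one obtains $c_1(N_{s(Y)/\X_U})=-2\tilde h+q^*(c_1(\E)-\lambda)$ with $\E=\pi_*\O(f)$, and expansion via $\tilde h^2=q^*c_1(\E)\tilde h-q^*c_2(\E)$ then gives $\kappa_{1,0,1}=-2\lambda$. Applying Grothendieck--Riemann--Roch to $\O(\sigma)$ and $\O(f)$ (both with vanishing higher direct images) and using $c_1(\T_\pi)=-\pi^*\lambda$ yields $\kappa_{3,0,0}=4\lambda$ and $\kappa_{0,1,1}=12c_1(\E)+12\lambda$; the analogous projection computations on $Y$ give $\kappa_{2,1,0}=-c_1(\E)-\lambda$ and $\kappa_{1,2,0}=c_1(\E)$. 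Upon substitution into the four invariant combinations, the class $c_1(\E)$ cancels from each, producing the stated values $\tfrac{7}{2}\lambda$, $\tfrac{1}{2}\lambda$, $-3\lambda$, and $0$.

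The main obstacle is evaluating $\kappa_{1,0,1}$ without a global decomposition of $c_2(\T_\pi)$ across the elliptic fibration, since the tangent sequence for $p:\X_U\to Y$ fails at singular fibers. Restricting to the section circumvents this: the section lies in the smooth locus of $p$, so the normal bundle sequence for $s$ applies without correction, reducing the computation to intersection theory on the $\pp^1$-bundle $Y\to\F_U$. The other $c_2(\T_\pi)$-involving class $\kappa_{0,1,1}$ is then recovered from Grothendieck--Riemann--Roch applied to $\O(f)$, so no direct analysis of $c_2(\T_\pi)$ near the singular locus of $p$ is required.
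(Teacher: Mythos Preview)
Your approach is correct and genuinely different from the paper's. Both begin the same way—verifying invariance of the four combinations under $\sigma\mapsto\sigma+\pi^*a$, $f\mapsto f+\pi^*b$ by binomial expansion and the projection formula—but diverge at the second step. The paper invokes Theorem~\ref{main} to know $A^1(\F_U)$ has rank one, then evaluates both sides on a single test curve: the resolved STU model $\tilde\pi^{STU}:\widetilde{X}^{STU}\to C$ of Klemm--Maulik--Pandharipande--Scheidegger, where the intersection numbers $\iota^*\kappa_{i,j,k}$ and $\iota^*\lambda$ are computed via toric geometry on an ambient $4$-fold. Your argument instead works intrinsically on the universal family, computing each $\kappa_{i,j,k}$ for a fixed choice of representatives as an explicit expression in $\lambda$ and $c_1(\E)$, then observing that $c_1(\E)$ cancels in the invariant combinations. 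The key step—evaluating $\kappa_{1,0,1}$ by restricting $c_2(\T_\pi)$ to the section and using the normal sequence together with adjunction—is a nice way to avoid analyzing $\T_\pi$ near singular fibers of $p$, and the remaining values follow cleanly from GRR and projection-formula manipulations on $Y=\pp(\E^\vee)$.

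Your route has the advantage of being self-contained: it neither appeals to the rank-one statement for $A^1(\F_U)$ nor imports the STU construction from an external reference, and it actually yields more refined information (the individual $\kappa$-classes modulo choice of twist). The paper's route trades these for a shorter argument once the STU numbers are quoted. One minor point worth making explicit in your write-up: the elliptic fibration $p:\X_U\to Y$ and section $s$ exist globally only after passing to an \'etale cover (or to $\tilde\F_U$), since $\O(f)$ and $\O(\sigma)$ are only \'etale-local; this is harmless because the invariant combinations descend, but it should be acknowledged.
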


\begin{proof}
A direct computation shows the above four $\kappa$ combinations are invariant under $f\mapsto f+\pi^*(l)$ and $\sigma \mapsto \sigma+\pi^*(l^{\prime})$ for any $l,l^{\prime}\in A^1(\F_U)$.

By Theorem \ref{main}, we know $A^1(\F_U)$ is of rank one, so it is sufficient to check the identities by computing their intersection numbers with a suitable test curve:
\[
\iota: C\rightarrow \F_U.
\]
To construct the curve, we use the resolved version of the STU model in \cite{KMPS}. The STU model is a smooth Calabi-Yau 3-fold, endowed with a map:
\[
\pi^{STU}:X^{STU}\rightarrow \pp^1.
\]
It arises as an anti-canonical section of a toric 4-fold $Y$. The fan datum for $Y$ can be found in \cite[Section 1.3]{KMPS}. We use their notation. There are 10 primitive rays $\{\rho_i; 1\leq i\leq 10\}$, and the corresponding divisors are denoted as $D_i\in\Pic(Y)$. The anti-canonical class is:
\[
-K_Y=\sum_{i=1}^{10}D_i.
\]

The general fiber of $\pi^{STU}$ is a smooth elliptic K3 surface, but there are 528 singular fibers \cite[Proposition 1]{KMPS}, each of which has exactly one ordinary double point singularity.  Let $\epsilon:C\rightarrow \p^1$ be a double cover branched along the 528 points corresponding to the singular fibers.
The pullback of $X^{STU}$ by $\epsilon$ has double point singularities, and by resolving them we obtain the resolved STU model:
\[
\tilde{\pi}^{S T U}: \widetilde{X}^{S T U} \rightarrow C.
\]
All fibers of $\tilde{\pi}^{STU}$ are smooth elliptic K3 surfaces. Moreover the toric divisors $D_5,D_3\in\Pic(Y)$ restrict to the universal section and fiber for $\tilde{\pi}^{STU}$. The family $\tilde{\pi}^{S T U}$ defines a curve in the moduli space $\F_U$:
\[
\iota:C\rightarrow \F_U.
\]

The intersection number $\iota^*(\lambda)$ is computed in \cite[Proposition 2]{KMPS}:
\[
\iota^*(\lambda)=4 E_{4}(q) E_{6}(q)[0]=4,
\]
where $E_4$ and $E_6$ are Eisenstein series, and we take the coefficient of $q^0$.

For the $\kappa$-classes, it suffices to perform the computation over the non-resolved STU model.  Since the tautological classes we consider are all invariant, we may assume the universal line bundles on $\F_U$ pull back to the toric divisors $D_5,D_3$. For $\kappa_{3,0,0}$, we have:
\[
\iota^*(\kappa_{3,0,0})=2\cdot\pi^{STU}_*\left(D_5^3\cdot\sum_{i=1}^{10}D_i\right),
\]
where the factor of 2 comes from the double cover $\epsilon$. Using toric geometry, all monomials of the form $D_i\cdot D_j\cdot D_k\cdot D_l$ can be explicitly determined. We obtain:
\[
\iota^*(\kappa_{3,0,0})=16.
\]
Other intersection numbers can be computed analogously. We record the final answers:
\[
\iota^*(\kappa_{3,0,0})=16\quad \iota^*(\kappa_{1,0,1})=-8\quad \iota^*(\kappa_{2,1,0})=-4
\]
\[\iota^*(\kappa_{0,1,1})=48\quad \iota^*(\kappa_{1,2,0})=0\quad \iota^*(\kappa_{0,3,0})=0.
\]
The four identities in the proposition then follow immediately.
\end{proof}

\bibliographystyle{alpha}
\bibliography{bibliography}

\end{document}